\newtheorem{theorem}{Theorem}[section]
\newtheorem{cor}[theorem]{Corollary}
\newtheorem{prop}[theorem]{Proposition}
\newtheorem{lemma}[theorem]{Lemma}
\newcommand{\J}{\mathscr{J}}
\newcommand{\N}{\mathbb{N}}
\newcommand{\R}{\mathscr{R}}
\renewcommand{\L}{\mathscr{L}}
\newcommand{\PSL}{\mathop{\mathrm{PSL}}\nolimits}
\newcommand{\GLS}{\mathop{\mathrm{GLS}}\nolimits}
\newcommand{\GL}{\mathop{\mathrm{GL}}\nolimits}
\newcommand{\GF}{\mathop{\mathrm{GF}}\nolimits}
\newcommand{\gauss}[3]{{#1\atopwithdelims[]#2}_#3}
\newcommand{\ee}{\mathrm{e}}
\newcommand{\set}[2]{\{#1:#2\}}  
\begin{document}

\title{Chains of subsemigroups}
\author{Peter J. Cameron\\
\small{Mathematics Institute, University of St Andrews,}\\
\small{North Haugh, St Andrews KY16 9SS, UK}\\
Maximilien Gadouleau\\
\small{School of Engineering and Computing Sciences, University of Durham,}\\
\small{Lower Mountjoy South Road, Durham DH1 3LE, UK}\\
James D. Mitchell\\
\small{Mathematics Institute, University of St Andrews,}\\
\small{North Haugh, St Andrews KY16 9SS, UK}\\
Yann Peresse\\
\small{School of Physics, Astronomy and Mathematics, University of
Hertfordshire}\\
\small{Hatfield, Herts AL10 9AB, UK}}
\date{Draft 11, January 2015}
\maketitle

\begin{abstract}
  We investigate the maximum length of a chain of subsemigroups in various
  classes of semigroups, such as the full transformation semigroups, the general
  linear semigroups, and the semigroups of order-preserving transformations of
  finite chains. In some cases, we give lower bounds for the total number of
  subsemigroups of these semigroups. We give general results for finite
  completely regular and finite inverse semigroups. Wherever possible, we state
  our results in the greatest generality; in particular, we include infinite
  semigroups where the result is true for these.

  The length of a subgroup chain in a group is bounded by the logarithm of the
  group order. This fails for semigroups, but it is perhaps surprising that
  there is a lower bound for the length of a subsemigroup chain in the full
  transformation semigroup which is a constant multiple of the semigroup order.
\end{abstract}

\section{The definition}

Let $S$ be a semigroup.  A collection of subsemigroups of $S$ is
called a \emph{chain} if it is totally ordered with respect to inclusion.  In
this paper we consider the problem of finding the longest chain of subsemigroups
in a given semigroup. From among several conflicting candidates for the
definition, we define the \textit{length} of a semigroup $S$ to be the largest
number of non-empty subsemigroups of $S$ in a chain minus 1; this is denoted
$l(S)$. There are several reasons for choosing this definition rather than
another, principally: several of the formulae and proofs we will present are
more straightforward with this definition (especially that in
Proposition~\ref{prop-ideal}, which is the basis for several of our results);
when applied to a group our definition of length coincides with the definition
in the literature (for more details see Section~\ref{section-groups}).  There
are some negative aspects of this definition too.  For example, if $S$ is a null 
semigroup (the product of every pair of elements equals $0$), then $l(S) = |S| -
1$; or if $S$ is empty, then $l(S) = - 1$. Our definition of length also
differs from the usual order-theoretic definition. 

The paper is organised as follows: we review some known results for groups in
Section~\ref{section-groups}; in Section~\ref{section-generalities} we present
some general results about the length of a semigroup and its relationship to the
lengths of its ideals; in Section~\ref{section-full-trans} we give a lower bound
for the length of the full transformation semigroup on a finite set, and
consider the asymptotic behaviour of this bound; in
Sections~\ref{section-order-preserving} and~\ref{section-general-linear} we
perform an analysis similar to that for the full transformation semigroup for
the semigroup of all order-preserving transformations, and for the
general linear monoid; in Sections~\ref{section-inverse}
and~\ref{section-completely-regular} we provide a formula for the length of an
arbitrary finite inverse or completely regular semigroup in terms of the lengths
of its maximal subgroups, and its numbers of $\L$- and $\R$-classes. In
Section~\ref{section-numbers}, as consequences of our results about the full
transformation monoid, we give some bounds on the number of subsemigroups, and
the maximum rank of a subsemigroup, of the full transformation monoid. 

Note that, with the exception of Proposition~\ref{prop-ideal}, all semigroups
we consider are finite.

\section{Subgroup chains in groups}\label{section-groups}

In this section we give a brief survey of a well-understood case, that of
groups. We will use some of the results in this section later in the paper.

The question of the length $l(G)$ of the longest chain of subgroups in a
finite group has been considered for some time. The base and strong generating
set algorithm for finite permutation groups, due to Charles Sims, involves
constructing a chain of point stabilisers. L\'aszl\'o Babai~\cite{babai}
pointed out that the length of such a chain in any action of $G$ is bounded
above by $l(G)$, so this parameter is important in the complexity analysis
of the algorithm. Babai gave a bound, linear in $n$, for the length of the
symmetric group $S_n$; the precise value of $l(S_n)$ was computed by Cameron,
Solomon and Turull~\cite{cst}: values are given in sequence A007238 in the
On-Line Encyclopedia of Integer Sequences \cite{sloane0}.

\begin{theorem}\label{symmetric_theorem}
  $l(S_n)=\lceil 3n/2\rceil - b(n) - 1$, where
  $b(n)$ is the number of ones in the base~$2$ expansion of $n$.
\end{theorem}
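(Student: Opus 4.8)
Following \cite{cst}, the plan is to prove $l(S_n)=f(n)$, where $f(n)=\lceil 3n/2\rceil-b(n)-1$, by strong induction on $n$, establishing simultaneously the companion statement $l(A_n)=f(n)-1$. The companion statement is needed because $A_n$ is maximal in $S_n$ and is its largest maximal subgroup, so an induction on $S_n$ alone does not close; within stage $n$ one first proves $l(A_n)=f(n)-1$ using only values at smaller degrees, and then deduces $l(S_n)=f(n)$.

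For the lower bound I would exhibit an explicit chain. Write $n=2^{a_1}+\cdots+2^{a_k}$ in binary, so that $k=b(n)$, and descend first through maximal intransitive subgroups $S_n\supset S_{2^{a_1}}\times S_{n-2^{a_1}}\supset S_{2^{a_1}}\times S_{2^{a_2}}\times S_{n-2^{a_1}-2^{a_2}}\supset\cdots$, using additivity of length over direct products; within a block of size $2^a$ with $a\ge1$ use the iterated imprimitive subgroups, which give $l(S_{2^a})\ge 1+l(S_{2^{a-1}}\wr S_2)\ge 2+2\,l(S_{2^{a-1}})$ and hence $l(S_{2^a})\ge 3\cdot 2^{a-1}-2$. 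A direct count shows the resulting chain has length exactly $f(n)$. Running the same construction inside $A_n$ gives $l(A_n)\ge f(n)-1$; here one uses the elementary fact that $l(H\cap A_n)=l(H)-1$ whenever $H\not\subseteq A_n$, which follows by induction on $l(H)$ by inspecting a maximal subgroup starting a longest chain of $H$.

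For the upper bound, $l(S_n)=1+\max\{l(M):M\ \text{maximal in}\ S_n\}$, and I would split by the O'Nan--Scott classification. For $M$ intransitive, $M=S_j\times S_{n-j}$ with $l(M)=f(j)+f(n-j)$ by induction; for $M$ imprimitive, $M=S_d\wr S_m$ with $dm=n$ and $l(M)=m\,f(d)+f(m)$ using the wreath formula $l(S_d\wr S_m)=m\,l(S_d)+l(S_m)$; for $M=A_n$, $l(M)=f(n)-1$ by the companion hypothesis. For $M$ primitive with $M\ne A_n$ one must bound $l(M)$ well below $f(n)$: for the affine, diagonal and almost-simple subspace-type subgroups $|M|\le n^{O(\log n)}$, so $l(M)\le\log_2|M|=O((\log n)^2)$; for the product-action and subset/partition-action subgroups $M$ embeds in some $S_c\wr S_r$ with $c^r\le n$ or $c=O(\sqrt n)$, so $l(M)=O(\sqrt n)$; in either case $l(M)=o(n)<f(n)-1$ for all large $n$, the finitely many remaining $n$ being checked by hand. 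It then remains to verify the arithmetic inequalities $f(j)+f(n-j)\le f(n)-1$ and $m\,f(d)+f(m)\le f(n)-1$ for $2\le j\le n-2$ and $d,m\ge2$; the first reduces to $\lceil 3j/2\rceil+\lceil 3(n-j)/2\rceil\le\lceil 3n/2\rceil+1$, the extra $1$ occurring only when $j$ and $n-j$ are both odd, in which case the carry forces $b(j)+b(n-j)\ge b(n)+1$, combined with $b(j)+b(n-j)\ge b(n)$ in general; the second is a longer but similar manipulation. Combining gives $l(S_n)\le f(n)$, with equality witnessed by $M=A_n$, while a parallel O'Nan--Scott analysis of the maximal subgroups of $A_n$ — dominated by $A_{n-1}$, by $(S_j\times S_{n-j})\cap A_n$ of length $f(j)+f(n-j)-1$, and by $(S_d\wr S_m)\cap A_n$ — together with the strict monotonicity of $f$, yields $l(A_n)\le f(n)-1$.

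I expect two genuine obstacles. The first is the primitive case: the standard crude bounds — Bochert's $|M|\le n!/\lceil(n+1)/2\rceil!$, or Praeger--Saxl's $|M|<4^n$ — only give $l(M)=O(n\log n)$, respectively $2n$, which exceed $f(n)\sim\tfrac32 n$, so one cannot avoid singling out the subset- and partition-action subgroups and bounding their length through the symmetric group on the smaller set. The second is the pair of arithmetic inequalities, especially the wreath-product one, which is exactly the assertion that peeling off a single power of $2$ at a time is optimal and is delicate because $b(\cdot)$ behaves awkwardly under both addition and multiplication. By contrast, the O'Nan--Scott bookkeeping, the direct- and wreath-product length formulas, and the simultaneous treatment of $A_n$ are routine once $l(H\cap A_n)=l(H)-1$ is in hand.
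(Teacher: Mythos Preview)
The paper does not prove this theorem: it is stated in the survey Section~\ref{section-groups} as a known result, attributed to Cameron, Solomon and Turull~\cite{cst}, with no argument supplied. Your proposal therefore cannot be compared against the paper's own proof, because there is none; what you have written is a sketch of the proof in the cited reference, which you explicitly acknowledge at the outset.

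As a sketch of~\cite{cst} your outline is accurate. The simultaneous strong induction on $l(S_n)=f(n)$ and $l(A_n)=f(n)-1$, the explicit chain for the lower bound built from the binary expansion of $n$ and iterated wreath products inside the $2$-power blocks, and the upper bound obtained by running through the O'Nan--Scott types of maximal subgroups are exactly the ingredients of that paper. Your identification of the two genuine obstacles is also correct: the primitive maximal subgroups other than $A_n$ cannot be disposed of by Bochert or Praeger--Saxl alone and require the finer case analysis you describe, and the arithmetic inequalities $f(j)+f(n-j)\le f(n)-1$ and $m\,f(d)+f(m)\le f(n)-1$ are where most of the combinatorial work lies. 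The auxiliary fact $l(H\cap A_n)=l(H)-1$ for $H\not\subseteq A_n$ is immediate from the normal-subgroup identity $l(G)=l(N)+l(G/N)$ recalled in the paper, since $H\cap A_n$ has index~$2$ in~$H$. One small point: your range $2\le j\le n-2$ for the intransitive inequality should be $1\le j\le n-1$, since $S_1\times S_{n-1}$ is a maximal subgroup and the case $j=1$ is needed (and indeed gives equality for many~$n$).
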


It is easy to see that, if $N$ is a normal subgroup of $G$, then
$l(G)=l(N)+l(G/N)$. (In one direction, there is a chain of length $l(N)+l(G/N)$
passing through $N$; in the other direction, if $H$ and $K$ are subgroups of $G$
with $H<K$, then either $H\cap N<K \cap N$ or $HN/N<KN/N$, so in any subgroup
chain in $G$, each step involves taking a step in either $N$ or $G/N$.) So, for
any group $G$, we obtain $l(G)$ by summing the lengths of the composition
factors of $G$, and the problem is reduced to evaluating the lengths of the
finite simple groups. The result cited in the preceding paragraph deals with the
alternating groups. The problem was further considered by Seitz, Solomon and
Turull~\cite{st,gst}. It is not completely solved for all finite simple groups,
but we can say that it is reasonably well understood. In what follows, we will
regard a formula containing $l(G)$ for some group $G$ as ``known''.

We will use a special case of the following (known) result later. The function
$\Omega(n)$ gives the number of prime divisors of $n$, counted with their
multiplicities; equivalently, the number of prime power divisors of $n$.

\begin{prop}
  \label{p:sol}
  For any group $G$, $l(G)\le\Omega(|G|)$. Equality holds if and only if each
  non-abelian composition factors of $G$ is a $2$-transitive permutation group
  of prime degree in which the point stabiliser $H$ also satisfies
  $l(H)=\Omega(|H|)$. In particular, any soluble group $G$ satisfies
  $l(G)=\Omega(|G|)$.
\end{prop}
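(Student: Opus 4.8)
The plan is to reduce to the case of simple groups, exploiting that both sides of the inequality behave additively: the formula $l(G)=l(N)+l(G/N)$ for $N$ a normal subgroup of $G$ noted above, and the complete additivity $\Omega(mn)=\Omega(m)+\Omega(n)$.

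First I would prove $l(G)\le\Omega(|G|)$ by induction on $|G|$. The case $|G|=1$ is clear. Otherwise pick a chain $1=G_0<G_1<\cdots<G_k=G$ with $k=l(G)$; then $G_{k-1}$ is a maximal subgroup of $G$ (else the chain refines) and $l(G_{k-1})=k-1$, since it is at least $k-1$ by the chain and at most $k-1$ because appending $G$ to any chain in $G_{k-1}$ yields a chain in $G$. As $|G_{k-1}|$ is a proper divisor of $|G|$ we have $\Omega(|G_{k-1}|)\le\Omega(|G|)-1$, so by induction $l(G)=l(G_{k-1})+1\le\Omega(|G|)$.

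For the equality case, combining the formula $l(G)=l(N)+l(G/N)$ with the inequality applied to $N$ and to $G/N$ shows that $l(G)=\Omega(|G|)$ forces $l(N)=\Omega(|N|)$ and $l(G/N)=\Omega(|G/N|)$ for every normal subgroup $N$; iterating along a composition series (and using Jordan--H\"older) gives that $l(G)=\Omega(|G|)$ if and only if every composition factor $S$ of $G$ satisfies $l(S)=\Omega(|S|)$. A cyclic composition factor of prime order satisfies this trivially, so it remains to determine the non-abelian simple groups $S$ with $l(S)=\Omega(|S|)$. Taking the top step of a longest chain in such an $S$ gives a maximal subgroup $M$ with $l(M)=l(S)-1=\Omega(|S|)-1$; since $l(M)\le\Omega(|M|)=\Omega(|S|)-\Omega([S:M])$, we get $\Omega([S:M])\le 1$, so $[S:M]=p$ is prime and $l(M)=\Omega(|M|)$. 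The coset action of $S$ on $S/M$ is faithful (as $S$ is non-abelian simple) and transitive of prime degree; being insoluble, by Burnside's theorem on transitive groups of prime degree it is $2$-transitive, with $M$ a point stabiliser. Conversely, if $S$ is non-abelian simple and $2$-transitive of prime degree $p$ with a point stabiliser $H$ satisfying $l(H)=\Omega(|H|)$, then $H$ is maximal (by primitivity), and concatenating a longest chain in $H$ with $S$ gives $l(S)\ge l(H)+1=\Omega(|H|)+\Omega(p)=\Omega(|S|)$, hence equality. Finally, a soluble group has only cyclic composition factors, so the condition on non-abelian composition factors is vacuous and $l(G)=\Omega(|G|)$.

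I expect the main obstacle to be the equality analysis for non-abelian simple $S$: it needs the structural observation that the top step of a longest chain must have prime index, together with the classical input of Burnside's theorem (a transitive permutation group of prime degree is $2$-transitive or soluble) to pass from ``$S$ has a maximal subgroup of prime index'' to ``$S$ is $2$-transitive of prime degree''. Propagating the equality from $G$ to all of its composition factors via additivity of $l$ and $\Omega$ is routine but should be spelled out.
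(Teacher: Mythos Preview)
Your proposal is correct and follows essentially the same line as the paper's proof: Lagrange gives the inequality, additivity of $l$ and $\Omega$ along a composition series reduces equality to the simple factors, and for a non-abelian simple factor the top step of a longest chain yields a maximal subgroup of prime index, whereupon Burnside's theorem gives $2$-transitivity. The only difference is that you spell out the induction for the inequality and the converse direction explicitly, whereas the paper compresses these into ``clear from Lagrange'' and ``the rest is clear''.
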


\paragraph{Remark} It follows from the Classification of Finite Simple Groups
that the non-abelian simple groups with this property are 
$\PSL(2,2^a)$ where $2^a+1$ is a Fermat prime,
$\PSL(2,7)$, $\PSL(2,11)$, $\PSL(3,3)$ and $\PSL(3,5)$.

\begin{proof}
It is clear from Lagrange's Theorem that $l(G)\le\Omega(|G|)$. Equality holds
if and only if it holds in every composition factor. 

A non-abelian finite simple group with this property has a subgroup of prime
index, and so acts as a transitive permutation group of prime degree. By
Burnside's theorem, it is $2$-transitive. The rest of the proposition is clear.
\end{proof}

Since a subsemigroup of a finite group is a subgroup, these results solve
particular cases of our general problem.

\section{Generalities about subsemigroup chains}\label{section-generalities}

In contrast to groups, where the length of a chain of subgroups is at most the
logarithm of the group order, a semigroup may have a chain whose length is equal
one less than its order. A null semigroup of any order has this property, as does
any semigroup which is not generated by a proper subset (i.e.\ any semigroup
whose $\J$-classes are semigroups of left or right zeros, and where $S/\J$ is a
chain).

If $S$ is a semigroup and $T$ is a subsemigroup of $S$, then $l(T) \leq l(S)$.
Similarly, if $\rho$ is a congruence on $S$, then, since subsemigroups are
preserved by homomorphisms,  $l(S/\rho) \leq l(S)$.

Let $I$ be an ideal of the semigroup $S$ and let $S/I$ denote the \textit{Rees
quotient} of $S$ by $I$, i.e.\ the semigroup with the elements of $S\setminus I$
together with an additional zero $0\not\in S$; the multiplication is given by
setting $xy=0$ if the product in $S$ lies in $I$, and letting it have its value
in $S$ otherwise. As noted above, in the following result we do not assume
any finiteness condition.

\begin{prop}[cf. Lemma 1 in \cite{Ganyushkin2011aa}]
  \label{prop-ideal}
    Let $S$ be a semigroup and let $I$ be an ideal of $S$. Then
    $l(S)=l(I)+l(S/I)$. 
\end{prop}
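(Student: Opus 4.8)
The plan is to prove the two inequalities $l(S) \le l(I) + l(S/I)$ and $l(S) \ge l(I) + l(S/I)$ separately.

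For the lower bound, I would take a chain $I = I_0 \supsetneq I_1 \supsetneq \cdots$ realizing $l(I)$ inside $I$, and a chain of subsemigroups of $S/I$ realizing $l(S/I)$; the latter all contain $0$ (or I can arrange this, noting that any subsemigroup chain of length $l(S/I)$ can be taken to consist of subsemigroups containing $0$, since $0$ is a zero and adjoining it to a subsemigroup does not shorten the chain — actually one must be slightly careful here, but the key point is that the bottom of the chain in $S/I$ can be taken to be $\{0\}$). Pulling each such subsemigroup $T$ of $S/I$ back to $(T \setminus \{0\}) \cup I$ gives a subsemigroup of $S$ containing $I$, and these form a chain from $I$ up to $S$ of length $l(S/I)$. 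Concatenating with the chain inside $I$ gives a chain in $S$ of length $l(I) + l(S/I)$, so $l(S) \ge l(I) + l(S/I)$.

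For the upper bound, I would take a maximal chain $S = U_0 \supsetneq U_1 \supsetneq \cdots \supsetneq U_k$ of subsemigroups of $S$ with $k = l(S)$, and analyze each step $U_{j} \supsetneq U_{j+1}$. The idea, mirroring the normal-subgroup argument for groups quoted earlier in the excerpt, is that each step must "use up" either a step in $I$ or a step in $S/I$: concretely, for consecutive members $U \supsetneq V$ of the chain, either $U \cap I \supsetneq V \cap I$ (a step visible in $I$), or else $U \cap I = V \cap I$, in which case I want to show the images of $U$ and $V$ in $S/I$ are distinct subsemigroups, i.e. $(U \setminus I) \cup \{0\} \supsetneq (V \setminus I) \cup \{0\}$ — this holds because if both $U \cap I = V \cap I$ and $U \setminus I = V \setminus I$ then $U = V$. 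Here $U \cap I$ and $V \cap I$ are subsemigroups of $I$ (intersections of a subsemigroup with an ideal), possibly empty; and $(U \setminus I) \cup \{0\}$ is a subsemigroup of $S/I$. So the chain projects to a chain in $I$ (after discarding repeats and the possibly-empty bottom terms) and a chain in $S/I$, with the total number of strict steps bounded by $l(I) + l(S/I)$.

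The main obstacle is bookkeeping around empty and degenerate members: $U_j \cap I$ may be empty for small $j$ and only becomes a genuine (nonempty) subsemigroup of $I$ at some point, so I need to check that the count of steps in $I$ is at most $l(I)$ rather than $l(I)+1$ — this works because the empty set, while allowed as a notional bottom of the $I$-chain, contributes the "$+1$" already absorbed into the definition $l(I) = (\text{max number of nonempty subsemigroups}) - 1$; I must align the two conventions carefully. Similarly on the $S/I$ side every projected subsemigroup contains $0$, so it is automatically nonempty, and I should double-check the edge cases where $I = S$ (then $S/I$ is the one-element zero semigroup, $l(S/I) = 0$) or where $I$ meets $S$ trivially. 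Once these conventions are pinned down, combining the two inequalities gives the result; I would also remark that this is where the chosen definition of length pays off, exactly as the paper's introduction foreshadows.
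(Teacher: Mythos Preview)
Your overall plan matches the paper's: obtain the lower bound by concatenating a chain in $I$ with the pullback of a chain in $S/I$, and for the upper bound project each step of a longest chain to $I$ and to $S/I$, observing that every step is strict in at least one of the two projections. The paper adds one device you do not mention: for each step $U_\alpha < U_{\alpha+1}$ it inserts the intermediate subsemigroup $U_\alpha \cup (U_{\alpha+1}\cap I)$ (a subsemigroup because it is the union, inside $U_{\alpha+1}$, of a subsemigroup and an ideal), and maximality of the chain forces this to coincide with one endpoint, yielding the \emph{exclusive} dichotomy that each step moves in exactly one of $I$ or $S/I$. For the bare inequality $l(S)\le l(I)+l(S/I)$ your inclusive ``at least one'' already suffices, so this is a refinement rather than an essential difference.

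There is, however, a genuine loose end in your bookkeeping, and the patch you propose is not correct. If the bottom member $U_0$ of a longest chain is disjoint from $I$ --- which can happen: take the three-element semilattice $S=\{e,f,0\}$ with $ef=fe=0$, the ideal $I=\{f,0\}$, and the maximal chain $\{e\}\subsetneq\{e,0\}\subsetneq S$ --- then the $I$-projections include $\emptyset$ and the number $a$ of strict $I$-steps can equal $l(I)+1$. The ``$-1$'' in the definition of length does not absorb this: with $a$ strict steps and $\emptyset$ among the projected values you obtain only $a$ distinct nonempty subsemigroups of $I$, hence only $l(I)\ge a-1$, and your count comes up one short (in the example $a=2$, $b=0$, $l(I)=1$). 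The correct compensation is on the $S/I$ side: when $U_0\cap I=\emptyset$ the projection $U_0/I=U_0\cup\{0\}$ strictly contains $\{0\}$, so the $S/I$-chain can be extended downward by $\{0\}$, giving $l(S/I)\ge b+1$; then $(a-1)+(b+1)=a+b\ge k$ as required. Equivalently, in the finite case one may simply arrange that the longest chain starts inside $I$, which is always possible since a nonempty ideal of a finite semigroup contains an idempotent. With this repair your argument and the paper's coincide.
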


\begin{proof} 
  We start by showing that $l(S)\geq l(I)+l(S/I)$.  Suppose that
  $\set{U_{\alpha}}{\alpha\hbox{ an ordinal},\ \alpha<l(I)}$ and 
  $\set{V_{\alpha}}{\alpha\hbox{ an ordinal},\ \alpha<l(S/I)}$ are 
  chains of non-empty proper subsemigroups of $I$ and $S/I$, respectively. Then 
  \begin{equation*}
    W_{\alpha}=
    \begin{cases}
      U_{\alpha} &\text{if }\alpha<l(I)\\
      (V_{\beta}\setminus \{0\})\cup I & \text{if }\alpha=l(I)+\beta<l(I)+l(S/I)
    \end{cases}
  \end{equation*}
  is a chain of $l(I)+l(S/I)$ proper subsemigroups of $S$, and so $l(S)\geq
  l(I)+l(S/I)$. 

  Suppose that $\mathcal{C}=\set{U_{\alpha}}{\alpha\hbox{ an ordinal},\
  \alpha<l(S)}$ is a chain of non-empty proper subsemigroups such that
  $U_{\alpha}<U_{\alpha+1}$ for all $\alpha<l(S)$.  We will show that we may
  assume, without loss of generality, that for all  $\alpha<l(S)$ either:
  \begin{equation}\label{dichotomy}
    \big(U_{\alpha+1}\setminus U_{\alpha}\big) \cap I=\emptyset\qquad\hbox{ or
    }\qquad U_{\alpha+1}\setminus U_{\alpha}\subseteq I .
  \end{equation}
  Since the union of a subsemigroup and an ideal is a semigroup, it follows that
  $U_\alpha\cup (U_{\alpha+1}\cap I)$ is a subsemigroup of $S$. Hence 
  $$U_{\alpha}\leq U_\alpha\cup (U_{\alpha+1}\cap I)\leq U_{\alpha+1}.$$
  If $l(S)$ is finite, then either $U_\alpha\cup (U_{\alpha+1}\cap
  I)=U_{\alpha}$ or $U_\alpha\cup (U_{\alpha+1}\cap I)=U_{\alpha+1}$. Therefore
  $\big(U_{\alpha+1}\setminus U_{\alpha}\big) \cap I=\emptyset$ or
  $U_{\alpha+1}\setminus U_{\alpha}\subseteq I $, respectively.

  Suppose that $l(S)$ is infinite. Then  replacing any subchain
  $U_{\alpha}<U_{\alpha+1}$  of $\mathcal{C}$ which fails (\ref{dichotomy}) by
  $U_{\alpha}<U_\alpha\cup (U_{\alpha+1}\cap I)<U_{\alpha+1}$ we obtain another
  chain of length $l(S)$. Furthermore, $U_\alpha\cup (U_{\alpha+1}\cap
  I)<U_{\alpha+1}$ and $U_{\alpha}<U_\alpha\cup (U_{\alpha+1}\cap I)$ satisfy
  (\ref{dichotomy}). 

  Assume without loss of generality that $\mathcal{C}$ satisfies
  (\ref{dichotomy}).  Note that $\set{U_\alpha\cap I}{\alpha<l(S)}$ is a chain
  of non-empty subsemigroups of $I$ and $\set{U_{\alpha}/I}{\alpha<l(S)}$
  is a chain of non-empty proper subsemigroups of $S/I$.  By (\ref{dichotomy}),
  for all $\alpha<l(S)$ either 
  $$U_{\alpha}\cap I=U_{\alpha+1}\cap I\quad\hbox{ and }\quad
  U_{\alpha}/I<U_{\alpha+1}/I$$
  or 
  $$U_{\alpha}/I=U_{\alpha+1}/I\quad\hbox{ and }\quad U_{\alpha}\cap
  I<U_{\alpha+1}\cap I$$
  Therefore $l(S)\leq l(I)+l(S/I)$.
\end{proof}

If $S$ is a semigroup and $x,y\in S$, then we write $x\J y$ if the principal
(two-sided) ideal $S^1xS^1$ generated by $x$ equals the ideal $S^1yS^1$
generated by $y$. The relation $\J$ is an equivalence relation called
\emph{Green's $\J$-relation}, and the equivalence classes are called
\emph{$\J$-classes}.  If $J_1$ and $J_2$ are $\J$-classes of a semigroup, then
we write $J_1\leq_{\J}J_2$ if $S^1xS^1\subseteq S^1yS^1$ for any $x\in J_1$ and
$y\in J_2$. It is straightforward to verify that $\leq_{\J}$ is a partial order
on the $\J$-classes of $S$. 

If $J$ is a $\J$-class of a finite semigroup $S$, then its \emph{principal
factor} $J^*$ is the semigroup with elements $J\cup \{0\}$ ($0\not\in J$) and
the product $xy$ of $x,y\in J$ defined to be its value in $S$ if $x,y, xy\in J$
and $0$ otherwise. In other words, if $J$ is not minimal, then $J^*$ is the Rees
quotient of the principal ideal generated by any element of $J$ by the ideal
consisting of those elements in $S$ whose $\J$-classes are not greater than $J$
under $\leq_{\J}$. If $J$ is minimal, then $J$ is a subsemigroup of $S$, and
$J^*$ is not isomorphic to the quotient in the previous sentence (which is
isomorphic to $J$), since $J^*$ has one more element. 

A semigroup $S$ is \emph{regular} if for every $x\in S$ there is $y\in S$ such
that $xyx=x$.

\begin{lemma} \label{lemma-regular}
  Let $S$ be a finite regular semigroup and let $J_1, J_2, \ldots, J_m$ be the
  $\J$-classes of $S$. Then $l(S)=l(J_1^*)+l(J_2^*)+\cdots +l(J_m^*)-1$.
\end{lemma}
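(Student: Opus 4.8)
The plan is to reduce the identity to a telescoping application of Proposition~\ref{prop-ideal}, by exhibiting $S$ as the top of an increasing chain of ideals whose successive Rees quotients are exactly the principal factors $J_i^*$.

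First I would relabel the $\J$-classes. Since $\leq_{\J}$ is a partial order on the finite set $\{J_1,\ldots,J_m\}$, we may choose the labelling to be a linear extension of $\leq_{\J}$, that is, so that $J_i\leq_{\J}J_j$ implies $i\leq j$. Set $I_k=J_1\cup\cdots\cup J_k$ for $0\leq k\leq m$, so that $I_0=\emptyset$ and $I_m=S$. Each $I_k$ is an ideal of $S$: if $x\in I_k$ and $s\in S$ then $sx\leq_{\J}x$ and $xs\leq_{\J}x$, and since $\{J_1,\ldots,J_k\}$ is down-closed in the $\J$-order (by the choice of linear extension), both $sx$ and $xs$ lie in $I_k$. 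In particular $I_{k-1}$ is an ideal of $I_k$ for each $k$.

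Next I would identify $I_k/I_{k-1}$ with $J_k^*$ for each $k\in\{1,\ldots,m\}$. Both semigroups have underlying set $J_k\cup\{0\}$. For $x,y\in J_k$, the product $xy$ computed in $S$ lies in the ideal $I_k$, hence either $xy\in J_k$ or $xy\in I_{k-1}$; moreover $xy\in I_{k-1}$ precisely when the $\J$-class of $xy$ is different from $J_k$, since the union $I_k=I_{k-1}\cup J_k$ is disjoint. Comparing with the definitions, the product of $x$ and $y$ in $I_k/I_{k-1}$ equals $xy$ if $xy\in J_k$ and equals $0$ otherwise, which is exactly the multiplication of $J_k^*$. (For the minimal class $J_1$ this reads $I_1/I_0=J_1/\emptyset$, i.e.\ $J_1$ with a zero adjoined, which is indeed the principal factor of a minimal $\J$-class as defined above.) Hence $I_k/I_{k-1}\cong J_k^*$.

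Finally, applying Proposition~\ref{prop-ideal} to the ideal $I_{k-1}$ of $I_k$ gives $l(I_k)=l(I_{k-1})+l(I_k/I_{k-1})=l(I_{k-1})+l(J_k^*)$ for each $k$, and summing this over $k=1,\ldots,m$, using $l(I_0)=l(\emptyset)=-1$, yields $l(S)=l(I_m)=-1+\sum_{k=1}^{m}l(J_k^*)$, which is the claim (the sum being independent of the labelling). I do not anticipate a serious obstacle: the only points requiring care are that $\{J_1,\ldots,J_k\}$ really is down-closed, which is immediate from the linear extension, and that the isomorphism $I_k/I_{k-1}\cong J_k^*$ be checked uniformly for the minimal and non-minimal classes. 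I would remark in passing that regularity of $S$ is not actually used in this argument, only the finiteness assumed in the statement.
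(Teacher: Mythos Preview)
Your argument is correct and is essentially the same telescoping use of Proposition~\ref{prop-ideal} as the paper's, just run bottom-up (building the chain of ideals $I_0\subset I_1\subset\cdots\subset I_m$) rather than top-down (peeling off a maximal $\J$-class and recursing on the complementary ideal). Two remarks are worth making.

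First, your observation that regularity is not actually needed is correct, and it is precisely the organizational difference between the two proofs that explains this. The paper, having removed the maximal class $J_1$, must identify the $\J$-classes of the ideal $I=S\setminus J_1$ in order to recurse; it invokes regularity (via \cite[Proposition A.1.16]{Rhodes2009aa}) to guarantee that $I$ is again regular and that its $\J$-classes are exactly $J_2,\ldots,J_m$. Your proof never needs to know the $\J$-structure of any $I_k$: you only use that each $I_k$ is an ideal of $S$ and that $I_k\setminus I_{k-1}=J_k$ as sets, both of which are immediate from the linear extension. So you get the lemma for all finite semigroups, not just regular ones.

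Second, the only delicate step is your base case $k=1$, where you apply Proposition~\ref{prop-ideal} with $I_0=\emptyset$. The proposition as stated and proved in the paper takes $I$ to be a (nonempty) ideal, so strictly speaking this is outside its scope. The fix is trivial: handle $k=1$ directly by observing that $J_1$ is a subsemigroup (being the minimal $\J$-class) and that $J_1^*$ is $J_1$ with an adjoined zero, whence $l(I_1)=l(J_1)=l(J_1^*)-1$, and then telescope from $k=2$ onward. This is exactly how the paper handles the terminal step of its recursion (``If $I=\emptyset$, then $S=J_1$, and so $l(S)=l(J_1)=l(J_1^*)-1$'').
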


\begin{proof}
  Assume without loss of generality that $J_1$ is maximal in the partial order
  of $\J$-classes on $S$. It follows that $I=S\setminus J_1$ is an ideal. Hence
  by Proposition~\ref{prop-ideal} it follows that $l(S)=l(I)+l(S/I)$.
 If $I=\emptyset$, then $S=J_1$, and so 
  $l(S)=l(J_1)=l(J_1^*)-1$, in which case we are finished.

 Suppose that $I\not=\emptyset$. Then $S/I$ is isomorphic to $J_1^*$ and so
  $l(S)=l(I)+l(J_1^*)$. Since $S$ is regular and $I$ is an ideal, it
  follows by Proposition A.1.16 in \cite{Rhodes2009aa} that $I$ is regular and
  the $\J$-classes of $I$ are $J_2, J_3, \ldots, J_m$.  Therefore repeating the
  argument in the previous paragraph a further $m-2$ times, we obtain
  $$l(S)=l(J_1^*)+l(J_2^*)+\cdots +l(J_m^*)-1,$$
  as required.
\end{proof}

We conclude this section with a simple application of the results in this, and
the previous, sections.  

\begin{prop}
  Let $S$ be a semigroup generated by a single element $s$ and let $m,n\in\N$ be
  the least numbers such that $s^{m+n}=s^{m}$.  Then $l(S)=m+\Omega(n)-1$,
  where $\Omega(n)$ is the number of prime power divisors of $n$. 
\end{prop}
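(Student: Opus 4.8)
The plan is to decompose the monogenic semigroup $S = \langle s \rangle$ into its ``tail'' and its ``cyclic part'' and apply Proposition~\ref{prop-ideal} together with Proposition~\ref{p:sol}. Recall that $S = \{s, s^2, \dots, s^{m+n-1}\}$ has exactly $m+n-1$ elements, and the set $K = \{s^m, s^{m+1}, \dots, s^{m+n-1}\}$ is the minimal ideal of $S$; it is a cyclic group of order $n$ (with identity $s^t$ for the unique $t$ in the index set with $t \equiv 0 \pmod n$ and $m \le t \le m+n-1$). First I would verify these standard facts about monogenic semigroups: that $K$ is an ideal, that multiplication on $K$ makes it isomorphic to $\mathbb{Z}/n\mathbb{Z}$, and hence that the Rees quotient $S/K$ is the null-ish semigroup on $\{s, s^2, \dots, s^{m-1}, 0\}$ in which any product landing in $K$ becomes $0$.

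Next I would compute $l(K)$ and $l(S/K)$ separately. Since $K$ is a finite cyclic (hence soluble) group, Proposition~\ref{p:sol} gives $l(K) = \Omega(n)$. For $S/K$, the point is that $S/K$ is generated by the single element $s$ (its image), with relations $s^m = 0$; every subsemigroup of $S/K$ is of the form $\{s^i, s^{i+1}, \dots, s^{m-1}, 0\}$ for some $i$ with $1 \le i \le m$, together with possibly omitting $0$ only in the trivial case $\{s^{m-1}\}$ if $m-1 \ge 1$ and $s^{m-1} \ne 0$ — more carefully, the subsemigroups form a chain under inclusion already, and one counts that the longest chain of non-empty subsemigroups has $m$ members, giving $l(S/K) = m-1$. (A clean way to see this: $S/K$ has $m$ elements and is itself a semigroup not generated by any proper subset in the relevant sense — its $\J$-classes are singletons and $S/K$ modulo $\J$ is a chain — so by the remark in Section~\ref{section-generalities}, $l(S/K) = |S/K| - 1 = m - 1$.)

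Finally, Proposition~\ref{prop-ideal} applied to the ideal $K$ of $S$ yields
\[
  l(S) = l(K) + l(S/K) = \Omega(n) + (m-1) = m + \Omega(n) - 1,
\]
as claimed. One should also dispose of the degenerate boundary cases: if $m = 0$ this needs the convention that $s^0$ is not in $S$ (so the formula should be read with $m \ge 1$, the case where $S$ is a group being $m=1$, $l(S) = \Omega(n)$, consistent with Proposition~\ref{p:sol}); and if $n = 1$, so $S$ is the nilpotent-plus-idempotent chain of length $m$, then $\Omega(1) = 0$ and $l(S) = m-1 = |S|-1$, again consistent.

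The main obstacle is the careful bookkeeping for $S/K$: one must be sure that \emph{every} non-empty subsemigroup of $S/K$ really is one of the ``final segments'' $\{s^i, \dots, s^{m-1}\} \cup \{0\}$ (plus the lone exception not containing $0$), so that the longest chain has exactly $m$ terms rather than something smaller. This is elementary but is the one place where an off-by-one error could creep in; invoking the general principle about semigroups whose $\J$-classes are trivial and whose $\J$-order is a chain is the cleanest way to sidestep an explicit enumeration.
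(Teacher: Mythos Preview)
Your argument is correct and essentially identical to the paper's: both use Proposition~\ref{prop-ideal} together with $l(C_n)=\Omega(n)$ from Proposition~\ref{p:sol}, the only cosmetic difference being that the paper peels off the singleton $\J$-classes $\{s\},\{s^2\},\ldots,\{s^{m-1}\}$ one at a time by $m-1$ applications of Proposition~\ref{prop-ideal}, whereas you remove the ideal $K$ in one step and then compute $l(S/K)=m-1$ directly. Your worry about classifying all subsemigroups of $S/K$ is unnecessary: the chain of ideals $\{0\}\subset\{s^{m-1},0\}\subset\cdots\subset S/K$ already gives $l(S/K)\ge m-1$, and $l(S/K)\le |S/K|-1=m-1$ is automatic.
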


\begin{proof}
  The $\J$-classes of $S$ are
  $$\{\{s\}, \{s^2\}, \ldots, \{s^{m-1}\}, \{s^m, s^{m+1},\ldots,
  s^{m+n-1}\}\},$$
  where the non-singleton class is the cyclic group
  $C_n$ with $n$ elements. 
  By repeatedly applying Proposition~\ref{prop-ideal}, 
  $$l(S)=m+l(C_n)-1,$$
  and $l(C_n)=\Omega(n)$ by Proposition~\ref{p:sol}.
\end{proof}

\section{The full transformation semigroup}\label{section-full-trans}

\subsection{Long chains}

The full transformation semigroup, denoted $T_n$, consists of all functions with
domain and codomain $\{1,\ldots,n\}$ under the usual composition of functions.
Clearly $|T_n|=n^n$. In this section, we will prove the following theorem.

\begin{theorem}\label{thm-full-transformation-monoid}
  $$l(T_n) \ge \ee^{-2} n^n - 2\ee^{-2}(1-\ee^{-1}) n^{n-1/3} - o(n^{n-1/3}).$$
\end{theorem}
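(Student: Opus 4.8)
The plan is to work through the ideal structure of $T_n$. Since $T_n$ is regular (given $f$, any $g$ sending each point of $\mathrm{im}(f)$ to one of its $f$-preimages satisfies $fgf=f$), Lemma~\ref{lemma-regular} gives $l(T_n)=\sum_{r=1}^{n} l(J_r^*)-1$, where $J_r$ is the $\J$-class of transformations of rank exactly $r$ and $J_r^*$ is its principal factor; so it is enough to bound each $l(J_r^*)$ from below and add up. The only structural fact I would use about $J_r^*$ is the standard rank criterion for composites: if $f$ and $g$ have rank $r$, then $fg$ again has rank $r$ --- equivalently $fg\neq 0$ in $J_r^*$ --- exactly when $\mathrm{im}(f)$ is a transversal of $\ker(g)$, i.e.\ meets each kernel class of $g$ in a single point; otherwise $fg=0$ in $J_r^*$.

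The next step is to exhibit, for each $r$, a large \emph{null} subsemigroup of $J_r^*$: a set $X_r\subseteq J_r$ such that every product of two of its elements has rank $<r$. For such a set $X_r\cup\{0\}$ is a subsemigroup and its subsets, built up one element at a time, form a chain of length $|X_r|$, so $l(J_r^*)\ge|X_r|$ and hence $l(T_n)\ge\sum_{r}|X_r|-1$. Nilpotency can be enforced uniformly by fixing a ``forbidden set'' $Z\subseteq\{1,\dots,n\}$ and setting
\[
  X_r=\set{f\in J_r}{\mathrm{im}(f)\cap Z=\emptyset\hbox{ and some kernel class of }f\hbox{ lies in }Z}.
\]
Indeed, if $f,g\in X_r$ and $K\subseteq Z$ is a kernel class of $g$, then $\mathrm{im}(f)\cap K=\emptyset$, so $\mathrm{im}(f)$ is not a transversal of $\ker(g)$ and $fg=0$ in $J_r^*$. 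One is free to choose $|Z|$ depending on $n$, and even on $r$ (nilpotency is checked inside each $J_r$ separately). The crude choice $|Z|=1$, $Z=\{c\}$, already gives $\sum_r|X_r|=|\set{f\in T_n}{c\notin\mathrm{im}(f),\ f^{-1}(f(c))=\{c\}}|=(n-1)(n-2)^{n-1}=\ee^{-2}n^n-O(n^{n-1})$, so the leading term comes essentially for free; allowing $Z$ to grow with $n$ is then the device for obtaining the precise second-order term stated in the theorem.

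The remaining step is the asymptotic count. For $|Z|=z$, $\sum_r|X_r|$ is the number of transformations of $\{1,\dots,n\}$ whose image avoids $Z$ and for which some value is attained only inside $Z$; this equals $(n-z)^n$ minus the number of those all of whose values are already attained outside $Z$, a correction expressible through sums of the form $\sum_{j}\binom{n-z}{j}\,j!\,S(n-z,j)\,j^{z}$. Estimating this correction using the concentration of the image size of a random transformation about $n(1-\ee^{-1})$, and then optimising $z$, is what produces the exponent $n-1/3$ and the constant $2\ee^{-2}(1-\ee^{-1})$. I expect this last analysis to be the main obstacle: the algebraic ingredients --- regularity, the transversal criterion, closure and nilpotency of $X_r\cup\{0\}$, and the application of Lemma~\ref{lemma-regular} --- are routine, whereas bounding $(n-z)^n$ minus a sum of Stirling-weighted power moments cleanly enough, and balancing the competing error terms at order $n^{n-1/3}$, requires care.
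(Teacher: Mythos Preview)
Your construction coincides with the paper's: taking $Z=\{c\}$ (the paper uses $c=n$), your $X_r$ is exactly the null subsemigroup arising from the paper's league with $P=\{r\text{-partitions having }\{n\}\text{ as a singleton part}\}$ and $S=\{r\text{-subsets not containing }n\}$. But your asymptotic analysis is both simpler and sharper than the paper's, and you have in fact already finished: your closed form
\[
\sum_r|X_r|=(n-1)(n-2)^{n-1}=\ee^{-2}n^n-\ee^{-2}n^{n-1}+O(n^{n-2})
\]
is a strictly stronger lower bound than the one in the theorem, since $n^{n-1}=o(n^{n-1/3})$. You have misread the exponent: $n-\tfrac13>n-1$, so $n^{n-1/3}$ is the \emph{larger} quantity, and the ``precise second-order term'' in the statement is cruder, not finer, than your $O(n^{n-1})$. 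There is nothing to be gained by letting $Z$ grow, and the optimisation over~$z$ you sketch in your last paragraph is unnecessary.

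For comparison, the paper estimates the very same sum $\sum_k\binom{n-1}{k}S(n-1,k-1)\,k!=\sum_k(n-k)N(n-1,k-1)$, but rather than evaluating it in closed form it restricts to ranks $k$ within $n^{1/6}$ standard deviations of the mean rank of a random transformation and invokes Chebyshev's inequality; that truncation is precisely what produces the $n^{n-1/3}$ error. So your ``crude'' choice is the whole proof: drop the final paragraph, and the ``main obstacle'' you anticipate never arises.
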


The \emph{rank} of an element of $T_n$ is the cardinality of its image. The
$\mathscr{J}$-classes of $T_n$ are the sets $J_k$ of all elements of rank $k$.
Since $T_n$ is regular, Lemma~\ref{lemma-regular} implies that $l(T_n)$ is the
sum of the lengths of the principal factors $J_k^*$ of its
$\mathscr{J}$-classes, minus $1$.

A element $f$ of rank $k$ in $T_n$ has a kernel, which is the partition of
$\{1,\ldots,n\}$ into its pre-images (hence with $k$ parts), and an image, a
$k$-subset of $\{1,\ldots,n\}$. The set of all maps with given kernel $Q$ and
given image $A$ is an $\mathscr{H}$-class in the semigroup $T_n$, and has
cardinality $|A|!$.  So the number of maps of rank $k$ is
$$N(n,k)=S(n,k){n\choose k}k!,$$
where $S(n,k)$ is the Stirling number of the second kind.

If $f_1$ and $f_2$ are two maps of rank $k$ with kernels $Q_1,Q_2$ and images
$A_1,A_2$ respectively, then $f_1f_2$ has rank $k$ if $A_1$ is a transversal for
the partition $Q_2$, and smaller rank otherwise. So, if $P$ is a set of
$k$-partitions of $\{1,\ldots,n\}$ (partitions with $k$ parts), and $S$ a set of
$k$-subsets, with the property that no element of $S$ is a transversal for any
element of $Q$, then the set of maps with kernel in $P$ and image in $S$ is a
null semigroup in $J_{k}^*$.  We call a set $(P,S)$ with this property a
\emph{league}, and define its \emph{content} to be $|P|\cdot|S|$.

If a league $(P,S)$ of rank $k$ has content $m$, then the set of all maps $f$
with kernel in $P$ and image in $S$ has the property that the product of any two
of its elements has rank smaller than $k$; so this set, together with zero,
forms a null subsemigroup of the principal factor $J_k^*$ of order $1+m\cdot
k!$. This semigroup has a chain of subsemigroups of length equal to one less
than its order. Combining these observations with Lemma~\ref{lemma-regular}, we
obtain the following result.

\begin{prop}
  Let $F(n,k)$ be the largest content of a league of rank $k$ on
  $\{1,\ldots,n\}$. Then
  $$l(T_n)\ge\sum_{k=1}^nF(n,k)k! - 1.$$
\end{prop}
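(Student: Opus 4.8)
The plan is to assemble the bound $\ell(T_n)\ge\sum_{k=1}^n F(n,k)k!-1$ from two ingredients already in place: the additive behaviour of length over $\mathscr J$-classes of a regular semigroup (Lemma~\ref{lemma-regular}), and the observation that a null semigroup of order $N$ has length $N-1$ (stated in Section~\ref{section-generalities}). First I would fix $k$ and an optimal league $(P,S)$ of rank $k$ realising the content $F(n,k)$, and let $U_k$ be the set of all maps in $T_n$ whose kernel lies in $P$ and whose image lies in $S$, together with the zero of the principal factor $J_k^*$. The defining property of a league — no element of $S$ is a transversal for any element of $P$ — guarantees, via the product rule for rank described in the paragraph before the statement, that the product of any two elements of $U_k\setminus\{0\}$ has rank strictly less than $k$, hence is $0$ in $J_k^*$. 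Thus $U_k$ is a null subsemigroup of $J_k^*$, and by counting, since each $\mathscr H$-class (fixed kernel, fixed image) has exactly $k!$ elements and the kernels/images range independently over $P$ and $S$, we get $|U_k| = |P|\cdot|S|\cdot k! + 1 = F(n,k)k! + 1$.

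Next, since $U_k$ is null of order $F(n,k)k!+1$, every subset of $U_k$ containing $0$ is a subsemigroup, so $U_k$ contains a chain of subsemigroups of length $|U_k|-1 = F(n,k)k!$; hence $\ell(J_k^*)\ge \ell(U_k)\ge F(n,k)k!$. Summing over $k$ and invoking Lemma~\ref{lemma-regular}, which gives $\ell(T_n)=\sum_{k=1}^n \ell(J_k^*)-1$ because $T_n$ is regular with $\mathscr J$-classes $J_1,\dots,J_n$, yields
\[
\ell(T_n)=\sum_{k=1}^n \ell(J_k^*)-1 \ge \sum_{k=1}^n F(n,k)k! - 1,
\]
as required.

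The argument is essentially a packaging of facts already established, so there is no serious obstacle; the one point that needs a line of care is verifying that $U_k$ really is closed under multiplication, i.e.\ that the league condition translates exactly into "product has smaller rank, hence equals $0$ in $J_k^*$" — this is where the precise definition of the principal factor (products leaving $J_k$ collapse to $0$) is used. A second minor point is to be sure the count $|U_k|=F(n,k)k!+1$ is correct: distinct pairs (kernel, image) give disjoint $\mathscr H$-classes, each of size $k!$, and the $+1$ is the adjoined zero. Everything else is immediate from the cited results.
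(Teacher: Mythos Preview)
Your proposal is correct and follows essentially the same approach as the paper: construct a null subsemigroup of each principal factor $J_k^*$ from an optimal league, deduce $\ell(J_k^*)\ge F(n,k)k!$, and then sum via Lemma~\ref{lemma-regular}. The paper's proof is simply a more compressed version of exactly this argument.
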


We prove Theorem~\ref{thm-full-transformation-monoid} by a suitable choice of
leagues, as follows. Choose one element of the set $\{1,\ldots,n\}$, say $n$;
let $P$ be the set of all $k$-partitions having $n$ as a singleton part, and $S$
the set of all $k$-subsets not containing $n$. Then clearly $(P,S)$ is a league,
and its content is 
$${n-1\choose k}S(n-1,k-1).$$

\begin{lemma}
  The expected rank $E(n)$ of a transformation in $T_n$ chosen uniformly at random
  satisfies
  $$
  E(n) = (1 - \ee^{-1}) n + O(1).
  $$
  Moreover, the standard deviation $\sigma(n)$ of the rank satisfies
  \begin{equation}
    \sigma(n) \le \sqrt{\ee^{-1} - 2 \ee^{-2}} \sqrt{n+1}
  \end{equation}
  for $n$ large enough.
\end{lemma}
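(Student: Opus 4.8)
The plan is to compute the rank of a uniformly random transformation $f \in T_n$ via the indicator random variables $X_i$, where $X_i = 1$ if $i \in \mathrm{im}(f)$ and $X_i = 0$ otherwise, so that the rank is $X = \sum_{i=1}^n X_i$. First I would observe that $\mathbb{P}(X_i = 0)$ is the probability that no element of $\{1,\dots,n\}$ maps to $i$, which is $(1 - 1/n)^n$; hence $\mathbb{E}(X_i) = 1 - (1-1/n)^n$ and, by linearity, $E(n) = \mathbb{E}(X) = n\bigl(1 - (1-1/n)^n\bigr)$. Since $(1-1/n)^n = \ee^{-1} + O(1/n)$, this gives $E(n) = (1 - \ee^{-1})n + O(1)$, proving the first assertion. (In fact one could make the $O(1)$ term explicit, as $(1-1/n)^n = \ee^{-1}(1 - 1/(2n) + O(1/n^2))$, but the stated form suffices.)

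For the variance I would compute $\mathrm{Var}(X) = \sum_i \mathrm{Var}(X_i) + \sum_{i \neq j} \mathrm{Cov}(X_i, X_j)$. Writing $p = (1-1/n)^n$, we have $\mathrm{Var}(X_i) = p(1-p)$, so the diagonal contributes $np(1-p)$. For the off-diagonal terms, $\mathbb{E}(X_i X_j) = \mathbb{P}(i \notin \mathrm{im}(f) \text{ and } j \notin \mathrm{im}(f)) = (1 - 2/n)^n$ for $i \neq j$, hence $\mathrm{Cov}(X_i, X_j) = (1-2/n)^n - (1-1/n)^{2n}$. The key analytic point is that this covariance is negative for $n$ large: indeed $(1-2/n)^n = \ee^{-2}(1 + O(1/n))$ while $(1-1/n)^{2n} = \ee^{-2}(1 + O(1/n))$, and a more careful expansion (using $\log(1-2/n) = -2/n - 2/n^2 - \cdots$ versus $2\log(1-1/n) = -2/n - 1/n^2 - \cdots$) shows $(1-2/n)^n < (1-1/n)^{2n}$, so each covariance term is $\le 0$ once $n$ is large. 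Therefore for large $n$,
$$
\mathrm{Var}(X) \le n\,p(1-p) \le n\bigl(\ee^{-1} - 2\ee^{-2}\bigr) + o(n),
$$
using $p(1-p) = \ee^{-1}(1-\ee^{-1}) + o(1) = \ee^{-1} - \ee^{-2} \le \ee^{-1} - 2\ee^{-2}$... wait, this needs care: $\ee^{-1} - \ee^{-2} > \ee^{-1} - 2\ee^{-2}$, so I would instead argue that $\mathrm{Var}(X) \le np(1-p)$ and then bound $np(1-p)$ directly. The honest route is: $\sigma(n)^2 \le np(1-p)$, and one checks $np(1-p) \le (\ee^{-1} - 2\ee^{-2})(n+1)$ for $n$ large by comparing the leading coefficients — here the slack $(\ee^{-1}-\ee^{-2}) - (\ee^{-1}-2\ee^{-2}) = \ee^{-2} > 0$ in the leading term is swamped only if we are not careful, so in fact one wants the sharper statement that the covariances are not merely nonpositive but contribute a negative amount of order $n$; then $\mathrm{Var}(X) = np(1-p) + n(n-1)\,\mathrm{Cov}(X_1,X_2) \le (\ee^{-1} - 2\ee^{-2})n + O(1)$, which does give the bound $\sigma(n) \le \sqrt{\ee^{-1} - 2\ee^{-2}}\sqrt{n+1}$ for $n$ large enough.

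The main obstacle is precisely this last point: naively bounding the variance by the sum of the individual variances $np(1-p) \sim (\ee^{-1} - \ee^{-2})n$ is not good enough, since $\ee^{-1} - \ee^{-2} > \ee^{-1} - 2\ee^{-2}$. One genuinely needs to exploit the negative correlation between the events $\{i \in \mathrm{im}(f)\}$ and $\{j \in \mathrm{im}(f)\}$ and show that $n(n-1)\mathrm{Cov}(X_1,X_2)$ cancels exactly enough of the diagonal term. Concretely, $\mathrm{Cov}(X_1,X_2) = (1-2/n)^n - (1-1/n)^{2n}$; writing both terms as $\ee^{-2}$ times $1 + c_1/n + O(1/n^2)$ and computing $c_1$ in each case, one finds $n(n-1)\mathrm{Cov}(X_1,X_2) = -\ee^{-2} n + O(1)$. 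Adding this to $np(1-p) = (\ee^{-1} - \ee^{-2})n + O(1)$ yields $\mathrm{Var}(X) = (\ee^{-1} - 2\ee^{-2})n + O(1) \le (\ee^{-1} - 2\ee^{-2})(n+1)$ for $n$ large enough, completing the proof. The rest is routine Taylor expansion of $(1-1/n)^n$, $(1-2/n)^n$, and $(1-1/n)^{2n}$ to second order.
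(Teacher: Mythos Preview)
Your approach is essentially the paper's: both obtain $E(n)=n\bigl(1-(1-1/n)^n\bigr)$ and then the variance via the same decomposition (the paper's cited formula $V(n)=n[(1-1/n)^n-(1-2/n)^n]+n^2[(1-2/n)^n-(1-1/n)^{2n}]$ is exactly $n\,\mathrm{Var}(X_1)+n(n-1)\,\mathrm{Cov}(X_1,X_2)$), followed by a second-order Taylor expansion.

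Two small points to clean up. First, the line $\mathbb{E}(X_iX_j)=\mathbb{P}(i\notin\mathrm{im}(f),\,j\notin\mathrm{im}(f))$ is a slip: that probability is $\mathbb{E}\bigl((1-X_i)(1-X_j)\bigr)$. Your covariance formula $(1-2/n)^n-(1-1/n)^{2n}$ is nonetheless correct, since $\mathrm{Cov}(X_i,X_j)=\mathrm{Cov}(1-X_i,1-X_j)$. Second, the final inference from $\mathrm{Var}(X)=(\ee^{-1}-2\ee^{-2})n+O(1)$ to $\mathrm{Var}(X)\le(\ee^{-1}-2\ee^{-2})(n+1)$ is not automatic: it requires the limiting constant term to be at most $\ee^{-1}-2\ee^{-2}$. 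The paper carries out the expansion one order further and finds the constant to be $(3\ee^{-2}-\ee^{-1})/2$, then checks this is below $\ee^{-1}-2\ee^{-2}$; that is the content of the ``routine Taylor expansion'' you defer, and it cannot be omitted.
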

\begin{proof}
  The exact values of the expectation $E(n)$ and of the variance $V(n)$ are given
  in \cite{hig1}, where their asymptotic estimates are also given. The expected
  rank is given by
  $$
  E(n) = n \left[ 1- \left( 1- \frac{1}{n} \right)^n \right] = (1- \ee^{-1})n + O(1).
  $$
  For the variance, we have
  \begin{eqnarray*}
    V(n) &=& n \left[ \left( 1- \frac{1}{n} \right)^n - \left( 1- \frac{2}{n}
    \right)^n \right] + n^2 \left[ \left( 1- \frac{2}{n} \right)^n - \left( 1-
    \frac{1}{n} \right)^{2n} \right]\\
    &=& n \left[\ee^{-1} \left(1 - \frac{1}{2n} + o(n^{-1}) \right) - \ee^{-2}
    \left( 1- \frac{2}{n}  + o(n^{-1}) \right) \right]\\
    &&{} + n^2 \left[ \ee^{-2} \left( 1 - \frac{2}{n} - \frac{2}{3n^2} +
    o(n^{-2}) \right) - \ee^{-2} \left( 1 - \frac{1}{n} - \frac{1}{6n^2} +
    o(n^{-2}) \right)\right]\\
    &=& n(\ee^{-1} - 2\ee^{-2}) + \frac{3 \ee^{-2} - \ee^{-1}}{2} + o(1).
  \end{eqnarray*}
  Since $\frac{3 \ee^{-2} - \ee^{-1}}{2} < \ee^{-1} - 2\ee^{-2}$, we have $V(n)
  \le (\ee^{-1} - 2\ee^{-2})(n+1)$ for $n$ large enough.
\end{proof}

We now return to the proof of Theorem~\ref{thm-full-transformation-monoid}.
Let $\tau = \sqrt{\ee^{-1} - 2 \ee^{-2}}$ and $K = \{k :
|k-E(n-1)| < n^{1/6} \tau n^{1/2}\}$; we then have
$$ n - k \ge n - E(n-1) - \tau n^{2/3} = \ee^{-1} n - \tau n^{2/3} - o(n^{2/3})$$
for any $k \in K$.  Also, for all $n$ large enough, $K$ contains all $k$ such
that $|k - E(n-1)| < n^{1/6} \sigma(n-1)$. Chebyshev's inequality then yields
$$ \sum_{k \in K} N(n-1,k-1) \ge (n-1)^{n-1} \left( 1 - n^{-1/3} \right) \ge
\ee^{-1} n^{n-1} (1- n^{-1/3}).
$$

Therefore, we obtain an overall chain of length at least
\begin{eqnarray*}
  \sum_{k \in K} {n-1 \choose k} S(n-1, k-1) k!
  &=&\sum_{k \in K} (n-k) N(n-1,k-1)\\
  &\ge& (\ee^{-1} n - \tau n^{2/3} - o(n^{2/3})) \ee^{-1} n^{n-1} \left( 1 - n^{-1/3} \right)\\
  &=& \ee^{-2} n^n - 2\ee^{-2}(1-\ee^{-1})(n^{n-1/3}) - o(n^{n-1/3}).
\end{eqnarray*}

\subsection{Combinatorial results}

The question of finding $F(n,k)$, the largest possible content of a league
$(P,S)$, where $P$ is a set of $k$-partitions and $S$ a set of $k$-subsets
of $\{1,\ldots,n\}$, is purely combinatorial, and maybe of some interest.
We give here some general bounds and some exact values.

We showed above that
\begin{equation}\label{equation-1}
  F(n,k)\ge{n-1\choose k}S(n-1,k-1).
\end{equation}
Another strategy gives a different bound, which is better for small $k$:
for $n\ge2$, we have
\begin{equation}\label{equation-2}
  F(n,k)\ge{n-2\choose k-2}S(n-1,k).
\end{equation}
This is proved by letting $S$ consist of all $k$-sets containing $1$ and $2$,
and $P$ the set of all $k$-partitions not separating $1$ and $2$.
Further improvements are possible.

In the extreme cases, we can evaluate $F(n,k)$ precisely, as follows. 

\begin{prop}
  \begin{enumerate}
    \item $F(n,1)=0$.
    \item For $n>3$, $F(n,2)=3(2^{n-3}-1)$, and a pair $(P,S)$ meets the bound
      if and only if $S$ is the set of edges of a triangle $T$ and $P$ is the set
      of $2$-partitions with $T$ contained in a part.
    \item $F(n,n-1)=s^2(2s-1)$, $s^2(2s+1)$, or $s(s+1)(2s+1)$ when
      $n=3s$, $3s+1$, or $3s+2$, respectively, with $s\ge1$.
    \item $F(n,n)=0$.
  \end{enumerate}
\end{prop}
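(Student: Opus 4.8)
The plan is to handle the four parts separately, with parts (a) and (d) being immediate and parts (b) and (c) requiring genuine combinatorial work.

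Parts (a) and (d) are trivial: a $1$-set is always a transversal of the unique $1$-partition, so no nonempty league of rank $1$ exists, giving $F(n,1)=0$; and for rank $n$ the unique $n$-partition (all singletons) has the unique $n$-set $\{1,\dots,n\}$ as its transversal, so again $F(n,n)=0$.

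For part (b), leagues of rank $2$, I would set up the problem as follows. A $2$-subset is an edge of $K_n$, and a $2$-partition is an unordered partition of $\{1,\dots,n\}$ into two nonempty blocks. A $2$-set $\{i,j\}$ is a transversal of a $2$-partition exactly when $i,j$ lie in different blocks, i.e.\ the edge $ij$ crosses the partition; so $(P,S)$ is a league iff no edge of $S$ crosses any partition in $P$, meaning every edge of $S$ lies inside a block of every partition in $P$. Equivalently, viewing each $2$-partition as a cut of $K_n$, we need the edge set $S$ to avoid all cuts in $P$. I would first argue that, fixing the graph $G=(\{1,\dots,n\},S)$, the set of $2$-partitions none of whose cuts meet $S$ is precisely the set of partitions in which each connected component of $G$ lies entirely in one block; if $G$ has $c$ components then the number of such $2$-partitions is $2^{c-1}-1$ (choose a nonempty proper subset of the component set, up to complementation). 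So the content is $|S|\cdot(2^{c-1}-1)$, and we must maximize $e\cdot(2^{c-1}-1)$ over graphs with $e$ edges and $c$ components on $n$ vertices. Since more edges can only decrease or preserve the number of components, for a fixed component-partition of the vertex set the optimum uses complete graphs on each component; and elementary optimization (a component of size $t$ contributes $\binom t2$ edges but removing it from the ``merged'' part and making it its own component multiplies the partition-count factor) shows the best configuration is one triangle plus $n-3$ isolated vertices, giving $c=n-2$, $e=3$, hence $F(n,2)=3(2^{n-3}-1)$, with the stated uniqueness. The main obstacle here is the uniqueness claim: I expect to need a careful case analysis showing that any other component structure (e.g.\ a single edge plus isolated vertices, or two triangles, or a $K_4$) gives strictly smaller content once $n>3$.

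For part (c), rank $n-1$, the relevant objects are $(n-1)$-partitions — which have one block of size $2$ and $n-2$ singletons, so are naturally indexed by a single edge $e$ of $K_n$ — and $(n-1)$-subsets, indexed by the omitted point. An $(n-1)$-set $\{1,\dots,n\}\setminus\{v\}$ is a transversal of the partition with doubleton block $\{a,b\}$ precisely when it meets $\{a,b\}$ in exactly one point, i.e.\ when $v\in\{a,b\}$. Hence $(P,S)$ — with $P$ a set of edges and $S$ a set of vertices — is a league iff no edge of $P$ is incident to any vertex of $S$; equivalently, writing $S$ as a vertex set $X$ and $P$ as an edge set $F$, we need $F$ to lie entirely inside the complement $\{1,\dots,n\}\setminus X$. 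To maximize the content $|X|\cdot|F|$, once $|X|=x$ is fixed the best choice is $F=$ all $\binom{n-x}{2}$ edges on the remaining vertices, so we must maximize $f(x)=x\binom{n-x}{2}=\tfrac12 x(n-x)(n-x-1)$ over integers $x\in\{0,\dots,n\}$. This is a routine cubic optimization; differentiating the continuous version and rounding gives the optimum near $x=n/3$, and splitting into the residue classes $n=3s,3s+1,3s+2$ and checking the two nearest integers yields exactly $s^2(2s-1)$, $s^2(2s+1)$, and $s(s+1)(2s+1)$ respectively. I would present this as: reduce to maximizing $x(n-x)(n-x-1)/2$, then verify the three cases by direct substitution, comparing $x=s$ with $x=s\pm1$ as needed. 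I anticipate no real obstacle in (c) beyond bookkeeping the three residue cases and confirming the claimed integer is the maximum rather than a nearby competitor.
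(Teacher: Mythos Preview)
Your proposal is correct and follows essentially the same route as the paper. In part~(b) you and the paper both reduce to maximizing $\bigl(\sum_i\binom{a_i}{2}\bigr)(2^{r-1}-1)$ over component-size vectors $(a_1,\dots,a_r)$ and argue down to a single triangle plus isolated vertices (the paper spells out the local moves---splitting parts of size $\ge4$, of size $2$, and merging two triangles---while you leave this as ``elementary optimization''); in part~(c) both arguments identify $(n-1)$-sets with their omitted vertex and $(n-1)$-partitions with their doubleton edge, reduce to maximizing $x\binom{n-x}{2}$ (equivalently the paper's $m(m-1)(n-m)/2$ with $m=n-x$), and finish by checking residues mod~$3$.
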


\begin{proof}
  In the first and last case, the proof is trivial and ommited.

  \smallskip

  \textbf{(b)} Consider the case $k=2$. Then $S$ is the set of edges of a graph,
  and the partitions in $P$ do not cross edges, so each part of such a partition
  is a union of connected components of a graph. We are going to make moves which
  will all increase $|S|\cdot|P|$.

  First, by including edges so that each component is a complete graph, and by
  including all partitions whose parts are unions of components, we do not
  decrease $|S|\cdot|P|$. So we may assume that this is the case. Thus, if the
  components have sizes $a_1,\ldots,a_r$, then
  $$|S|\cdot|P| = \left(\sum_{i=1}^r{a_i\choose 2}\right)(2^{r-1}-1),$$
  where $\sum_{i=1}^ra_i=n$.

  Next we claim that, if $a_i\ge4$, then we can increase $|S|\cdot|P|$ by
  replacing the part of size $a_i$ by two parts of sizes $1$ and $a_i-1$. For we
  increase $r$ by $1$, so the second factor more than doubles; so it will suffice
  to show that $${a_i-1\choose2}\ge\frac{1}{2}{a_i\choose 2},$$ since then the
  first factor will be at least half of its previous value. Now the displayed
  inequality is equivalent to $a_i\ge4$.

  Also, splitting a part of size $2$ into two parts of size $1$ more than
  doubles the second factor and reduces the first factor by $1$. So this is
  also an improvement (except in the case where the resulting partition has
  all $a_i=1$, when the product is zero).

  So we can continue the process, increasing the objective function, until all
  $a_i$ are equal to $1$ or $3$. 

  If two $a_i$ are equal to $3$, then replacing them by $5$ and $1$ improves
  the sum, since
  $$2{3\choose2}<{5\choose 2}.$$
  Then we can replace the $5$ by three parts of sizes $3$, $1$ and $1$, by the
  preceding argument.

  So we end with a part of size $3$ and $n-3$ parts of size $1$, giving the
  value $3(2^{n-3}-1)$ claimed, and also the extremal configuration described.

  \smallskip

  \textbf{(c)} Now consider the case $k=n-1$. Let $(P,S)$ satisfy the conditions.
  Identify each element of $S$ by the single point it omits, and each element of
  $P$ by the pair of points (or edge) in the same class; then the condition
  asserts that no point of $P$ is on an edge of $S$. So to optimise we want $P$ to
  be a complete graph on, say, $m$ points, and $S$ to consist of the remaining
  $n-m$ points. Then $|S|\cdot|P|=m(m-1)(n-m)/2$. 

  This is maximised when $m$ is roughly $2n/3$; a detailed but elementary
  calculation gives the stated result.
\end{proof}

Table~\ref{t:leagues} gives some further exact values, computed with the
GAP~\cite{gap} package GRAPE~\cite{grape}, except the value of $F(7, 4)$, which
was computed by Chris Jefferson using the Minion \cite{minion} constraint
satisfaction solver.  Each table entry also gives a lower bound, which is the
maximum of the values in (\ref{equation-1}) and (\ref{equation-2}).  The column
headed ``Total'' multiplies $F(n,k)$ (or the lower bound) by $k!$, and sums over
$k$.  The entries in columns $k=1$ and $k=n$ are zero and have been omitted.

\begin{table}[ht]
  \begin{equation*}
    \begin{array}{||r||r||r|r|r|r|r||}
      \hline
      n & \hbox{Total} & k=2 & 3 & 4 & 5 & 6 \\
      \hline
      2 & 0,0 &&&&&\\
      3 & 2,2 & 1,1 &&&&\\
      4 & 24,18 & 3,3 & 3,2 &&&\\
      5 & 330,326 & 9,7 & 28,28 & 6,6 &&\\
      6 & 5382,5130 & 21,15 & 150,150 & 125,125 & 12,10 &\\
      7 & 98250,93782 & 45,31 &760,620 & 1350,1350 & 390,390 & 20,15 \\
      \hline
    \end{array}
  \end{equation*}
  \caption{\label{t:leagues}Values and bounds for $F(n,k)$}
\end{table}

A kind of dual problem, which is also connected to the theory of transformation
semigroups (though not to the questions considered here) is the following:
\begin{quote}
Given $n$ and $k$, what is the smallest size of a collection of $k$-subsets
of $\{1,\ldots,n\}$ which contains a transversal to every $k$-partition of
$\{1,\ldots,n\}$?
\end{quote}

For some asymptotic results about this question, see \cite{bt}; for an
application to semigroups, in the special case where there is a permutation
group $G$ such that every orbit of $G$ on $k$-sets has this property,
see \cite{ac}.

\section{Order-preserving transformations}\label{section-order-preserving}

A transformation $f\in T_n$ is \textit{order-preserving}  if $(i)f < (j)f$
whenever $i < j$.  In this section we consider $O_n$, the semigroup of all
order-preserving transformations of $\{1,\ldots,n\}$.  It is shown in
\cite{hig2}, for example, that $|O_n| = {2n-1 \choose n}$.

We will denote by $F^*(n,k)$ denote the maximum content of a league $(P, S)$
where $P$ consists of $k$-partitions corresponding to kernels of
order-preserving transformations, and $S$ is an $k$-element subset of
$\{1,2,\ldots, n\}$. 

\begin{prop}
$$l(O_n) \ge {2n - 3 \choose n} - 1 
         = \frac{(n-1)(n-2)}{(2n-1)(2n-2)} |O_n| - 1.$$
\end{prop}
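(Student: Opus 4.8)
The plan is to imitate the construction used for $l(T_n)$, but restricted to kernels that arise as kernels of order-preserving maps. The key fact is the one recorded implicitly in the text: for a regular semigroup, $l(S)$ is the sum of the lengths of the principal factors minus $1$, and inside each principal factor $J_k^*$ a league $(P,S)$ of content $m$ produces a null subsemigroup of order $1 + m\cdot|A|!$ where $|A|=k$... except that here we must be careful, because $O_n$ is \emph{not} the full transformation semigroup, so the $\mathscr{H}$-classes are smaller. In $O_n$, an $\mathscr{H}$-class with a fixed image $A$ and a fixed (order-preserving) kernel $Q$ is a singleton: there is exactly one order-preserving map with that kernel and image, since the unique order isomorphism from the quotient chain to $A$ is forced. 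So the null subsemigroup built from a league $(P,S)$ in $J_k^*(O_n)$ has order $1 + |P|\cdot|S|$, not $1 + |P|\cdot|S|\cdot k!$, and hence contributes $|P|\cdot|S|$ to the length.

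Next I would make the explicit choice of league that gives the stated bound. Mirroring the choice for $T_n$ (single out the point $n$), take $P$ to be the set of all $k$-partitions that are kernels of order-preserving transformations and have $\{n\}$ as a singleton part, and $S$ the set of all $k$-subsets of $\{1,\ldots,n\}$ not containing $n$; since no such $S$-set is a transversal of any such partition (the part $\{n\}$ cannot be hit), $(P,S)$ is a league. A kernel of an order-preserving transformation of $\{1,\ldots,n\}$ is precisely an interval partition (each block is a set of consecutive integers), so $P$ corresponds to interval partitions of $\{1,\ldots,n-1\}$ into $k-1$ parts, of which there are $\binom{n-2}{k-2}$, and $S$ has $\binom{n-1}{k}$ elements. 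Summing the contents over all $k$ and applying Lemma~\ref{lemma-regular} (via the regular-semigroup reduction), together with the Vandermonde-type identity $\sum_k \binom{n-2}{k-2}\binom{n-1}{k} = \binom{2n-3}{n}$, gives
$$l(O_n)\ \ge\ \sum_{k} \binom{n-2}{k-2}\binom{n-1}{k}\ -\ 1\ =\ \binom{2n-3}{n} - 1,$$
and the final equality in the statement is just rewriting $\binom{2n-3}{n}$ as $\frac{(n-1)(n-2)}{(2n-1)(2n-2)}\binom{2n-1}{n}$ using $|O_n|=\binom{2n-1}{n}$.

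The main obstacle, and the step that needs genuine care rather than routine bookkeeping, is verifying that the league construction still produces a null subsemigroup \emph{inside the principal factor of $O_n$}, with the correct order. One must check that $O_n$ is regular (so that Lemma~\ref{lemma-regular} applies), that its $\mathscr{J}$-classes are again indexed by rank, and above all that within $J_k^*$ a product of two order-preserving maps with kernels in $P$ and images in $S$ genuinely drops rank — this follows from the same transversal argument as in $T_n$, but one should note it is inherited because $O_n \subseteq T_n$ and the rank-drop condition is about the ambient product. The only real subtlety is the $\mathscr{H}$-class size: getting the factor right ($1+|P|\cdot|S|$ versus $1+|P|\cdot|S|\cdot k!$) is what makes the bound come out as $\binom{2n-3}{n}-1$ rather than something larger, and conversely it is why the order-preserving bound is a much smaller fraction of $|O_n|$ than $\ee^{-2}$ was for $T_n$. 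Once that is pinned down, the combinatorial identity and the arithmetic rewriting are straightforward.
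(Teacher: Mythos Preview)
Your proposal is correct and follows essentially the same route as the paper: single out the point $n$, take $P$ to be the interval $k$-partitions with $\{n\}$ a singleton block and $S$ the $k$-subsets avoiding $n$, use that $\mathscr{H}$-classes in $O_n$ are singletons so the null subsemigroup in $J_k^*$ has order $1+|P|\cdot|S|$, and sum $\binom{n-1}{k}\binom{n-2}{k-2}$ over $k$ via Vandermonde to get $\binom{2n-3}{n}-1$. The paper's proof is terser (it does not spell out the regularity of $O_n$ or the inheritance of the rank-drop argument from $T_n$), but the construction and the arithmetic are identical.
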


Note that this lower bound is asymptotically $|O_n|/4$.

\begin{proof}
It is well known that each $\mathscr{H}$-class in $O_n$ is a singleton. For any
given value of the rank $k$, there are ${n \choose k}$ choices for the image of
a transformation in $O_n$, and ${n-1 \choose k-1}$ choices for its kernel, which
must be a partition of $\{1,\ldots,n\}$ into $k$ intervals \cite{gm} -- this is
because we specify such a partition by giving the $k-1$ points which divide the
interval appropriately. Therefore, the number of transformations of
rank~$k$ in $O_n$ is given by
$$ N^*(n,k) = {n \choose k} {n-1 \choose k-1}. $$

We can apply the same strategy as in $T_n$ in order to obtain long chains of
subsemigroups in $O_n$. Let $S$ be the set of all $k$-subsets of
$\{1,\ldots,n\}$ not containing $n$ and let $P$ be the set of partitions of
$\{1,\ldots,n\}$ into $k$ intervals such that the last interval is $\{n\}$. We
then have that $|S| = {n-1 \choose k}$ and $|P| = {n-2 \choose k-2}={n-2\choose
n-k}$ and so 
\begin{equation}\label{equation-3}
  F^*(n,k) \geq {n-1 \choose k} {n-2 \choose n-k}.
\end{equation}
Hence we have a chain of length
$$ \sum_{k=1}^n {n-1 \choose k} {n-2 \choose n-k} - 1 = {2n-3 \choose n} - 1, $$
using the Vandermonde convolution: ${m+n \choose k}=\sum_{i=0}^{k}{m \choose
i}{n \choose k-i}$. 
\end{proof}

As we did for the full transformation monoid, in the extreme cases, we can
evaluate $F^*(n,k)$ precisely, as follows. 

\begin{prop}
  \begin{enumerate}
    \item $F^*(n,1) = 0$.

    \item
      \begin{equation*}
        \begin{array}{lllr}
          F^*(n,2)=
          \max &\left\{\right.&\frac{1}{2} (n-\lfloor r^* \rfloor +1)(n- \lfloor
          r^* \rfloor)(\lfloor r^* \rfloor -1),\\
          && \frac{1}{2} (n-\lceil r^* \rceil +1)(n- \lceil r^*
          \rceil)(\lceil r^*
          \rceil -1) &\left.\right\}
        \end{array}
      \end{equation*}
      where $r^* = \left(2(n+1) - \sqrt{(n+1)^2 - 3n}\right)/3$.
    \item	
      $\displaystyle{F^*(n,n-1)=
      \left \lfloor \frac{n-1}{2} \right\rfloor \left \lceil
      \frac{n-1}{2} \right\rceil.}$

    \item $F^*(n,n) = 0$.
  \end{enumerate}
\end{prop}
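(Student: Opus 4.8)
The plan is as follows. Parts (a) and (d) are immediate once one recalls that the kernels in question are precisely the partitions of $\{1,\dots,n\}$ into intervals. When $k=1$ the only such partition is the single-block partition, for which \emph{every} $1$-subset is a transversal, so $S=\emptyset$ in any league and $F^*(n,1)=0$; when $k=n$ the only such partition is the partition into singletons, for which $\{1,\dots,n\}$ is a transversal, so again $S=\emptyset$ and $F^*(n,n)=0$. For (b) and (c) I would first convert the league condition at these extreme ranks into a purely combinatorial optimisation.

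For (c), put $k=n-1$. A partition of $\{1,\dots,n\}$ into $n-1$ intervals has a unique block of size $2$, necessarily a pair $\{i,i+1\}$, so these partitions correspond to the edges of the path $1-2-\cdots-n$, while an $(n-1)$-subset corresponds to the single point $j$ it omits. One checks that the subset omitting $j$ is a transversal for the partition with doubleton $\{i,i+1\}$ exactly when $j\in\{i,i+1\}$, so that $(P,S)$ is a league iff the set of points omitted by the members of $S$ is disjoint from the set of endpoints of the edges in $P$. Now $m\ge1$ edges of a path cover at least $m+1$ vertices, with equality precisely when they form one contiguous sub-path; hence a league with $|P|=m$ has content at most $m(n-m-1)$, and this is attained by taking $m$ consecutive edges and letting $S$ consist of all the uncovered points. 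Since $m$ and $n-1-m$ are non-negative integers with fixed sum $n-1$, the product $m(n-1-m)$ is largest when they are as equal as possible, giving $F^*(n,n-1)=\lfloor(n-1)/2\rfloor\lceil(n-1)/2\rceil$.

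For (b), put $k=2$. A partition into $2$ intervals is determined by a cut $r\in\{1,\dots,n-1\}$, with blocks $\{1,\dots,r\}$ and $\{r+1,\dots,n\}$, and a pair $\{a,b\}$ with $a<b$ is a transversal for it iff $a\le r<b$. Thus a set $P$ of cuts, which splits $\{1,\dots,n\}$ into $|P|+1$ intervals $I_0,\dots,I_{|P|}$, admits as compatible pairs exactly those with both elements in a common $I_j$; so the largest $S$ forming a league with $P$ has $|S|=\sum_j\binom{|I_j|}{2}$, and every such choice does yield a league. Writing $r=|P|+1$ for the number of intervals, the identity $\binom{a+b-1}{2}-\binom a2-\binom b2=(a-1)(b-1)\ge0$ shows, by repeatedly merging two blocks of size $\ge2$, that among all ways of splitting $n$ into $r$ parts the quantity $\sum_j\binom{|I_j|}{2}$ is maximised by one block of size $n-r+1$ together with $r-1$ singletons. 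Hence $F^*(n,2)=\max_{1\le r\le n}\tfrac12(r-1)(n-r+1)(n-r)$.

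It remains to optimise the cubic $g(r)=\tfrac12(r-1)(n-r+1)(n-r)$ over integers $r$. Solving $g'(r)=0$ gives $r=\bigl(2(n+1)\pm\sqrt{(n+1)^2-3n}\,\bigr)/3$ (note $(n+1)^2-3n=n^2-n+1\ge0$), and a short check shows that on $[2,n]$ the function $g$ increases up to $r^*=\bigl(2(n+1)-\sqrt{(n+1)^2-3n}\,\bigr)/3$ and then decreases, so its integer maximum is attained at $\lfloor r^*\rfloor$ or $\lceil r^*\rceil$; taking the larger of the two corresponding values of $g$ yields exactly the stated formula. The one genuinely non-routine point is the second half of (b): identifying the optimal interval profile via the convexity identity, and then establishing unimodality of $g$ carefully enough to reduce to the two candidate integers $\lfloor r^*\rfloor$ and $\lceil r^*\rceil$; the remaining arguments are routine bookkeeping.
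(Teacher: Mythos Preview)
Your proof is correct and follows essentially the same route as the paper: for (b) and (c) you reduce to the same one-parameter optimisation $(r-1)\binom{n-r+1}{2}$ and $m(n-1-m)$, only you reach it by fixing $P$ first and maximising $S$ whereas the paper fixes $S$ (as a graph) and maximises $P$; these are dual presentations of the same argument. You also supply the calculus locating $r^*$ and the unimodality check that the paper leaves implicit.
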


The bound in (b) is asymptotically $(2/27)n^3$; that in (c), $n^2/4$.

\begin{proof}
  The proofs are very similar to the case of arbitrary leagues; as such, we shall
  use a similar notation.

  \smallskip

  \textbf{(b)} Again, we can represent $S$ as a graph and each part of any
  partition in $P$ is a union of connected components of that graph. We can still
  assume that $S$ forms a union of $r$ cliques, of cardinalities $a_1,\ldots,a_r$.
  However, for a graph with $r$ connected components, there are at most $r-1$
  possible choices for a partition in $P$, with equality if and only if the vertex
  set of each connected component is an interval. Therefore, the maximum content
  of a league with partitions into intervals is given by
  $$
  \max_{1 \le r \le n} \max_{a_1,\ldots,a_r} \left\{ (r-1) \sum_{i=1}^r
  \frac{a_i(a_i-1)}{2} \right\},
  $$
  where the inner maximum is taken over all $a_1,\ldots,a_r$ such that $a_i \ge 1$
  for all $i$ and $\sum_{i=1}^r a_i = n$. This inner maximum is achieved for $a_1
  = \ldots = a_{r-1} = 1$, $a_r = n-r+1$ and is equal to $\frac{1}{2}
  (n-r+1)(n-r)(r-1)$. Maximising this polynomial gives the result.

  \smallskip

  \textbf{(c)} Again, we can represent $S$ as a set of $m$ points and $P$ as a
  graph on the remaining $n-m$ points. This time, $P$ can only contain edges of
  the form $\{i,i-1\}$ for any $i$ such that neither $i$ nor $i-1$ are amongst the
  $m$ points of $S$. Hence $P$ is a disjoint union of paths with at most $n-m-1$
  edges, which is achieved if the points of $S$ are $1$ up to $m$ and $P$ is the
  path from $m+1$ to $n$. Together, we obtain a content of $m(n-m-1)$, maximised
  for $m = \lfloor (n-1)/2 \rfloor$ or $m = \lceil (n-1)/2 \rceil$.
\end{proof}

Table~\ref{t:opleagues} gives some values for the function $F^*(n,k)$ giving
the maximum content of a league where the parts of the partitions are 
intervals, together with the lower bound in (\ref{equation-3}).
Again, the zeros for $k=1$ and $k=n$ are omitted.

\begin{table}[ht]
$$\begin{array}{||r||r||r|r|r|r|r||}
\hline
n & \hbox{Total} & k=2 & 3 & 4 & 5 & 6 \\
\hline
2 & 0,0 &&&&&\\
3 & 1,1 & 1,1 &&&&\\
4 & 5,5 & 3,3 & 2,2 &&&\\
5 & 22,21 & 6,6 & 12,12 & 4,3 &&\\
6 & 88,84 & 12,10 & 40,40 & 30,30 & 6,4 &\\
7 & 345,330 & 20,15 & 100,100 & 150,150 & 66,60 & 9,5 \\
\hline
\end{array}$$
\caption{\label{t:opleagues}Values and bounds for $F^*(n,k)$ in the monoid of
order-preserving transformations.}
\end{table}

\section{The general linear semigroup}\label{section-general-linear}

For $q$ a prime power and $n$ a positive integer, let $\mathrm{GLS}(n,q)$
denote the semigroup of all linear maps on the $n$-dimensional vector space
$V$ over the Galois field $\GF(q)$ of order $q$. We have
$|\GLS(n,q)|=q^{n^2}$, since the linear maps are representable as $n\times n$
matrices. 

Our technique here resembles that in the case of the full transformation
semigroup. For $1\le k\le n$, the set of linear maps of rank at most $k$
forms an ideal, so we can analyse the principal factors. 

One important difference is that the structure is far more top-heavy. Indeed,
the group $\GL(n,q)$ of maps of full rank $n$ contains a non-zero proportion
of the whole semigroup.

\begin{prop}
Given $q$, there is a constant $c(q)$, with $0<c(q)<1$, so that
$$\lim_{n\to\infty}\frac{\mathrm{GL}(n,q)}{\mathrm{GLS}(n,q)}=c(q).$$ 
\label{p:ordergl}
\end{prop}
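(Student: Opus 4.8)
The plan is to compute both orders explicitly and recognise the ratio as the partial product of a convergent infinite product. Representing linear maps on $V$ as $n\times n$ matrices over $\GF(q)$, we have $|\GLS(n,q)|=q^{n^2}$. For $\GL(n,q)$, build an invertible matrix by choosing its rows one at a time so that each new row avoids the span of the previous ones: there are $q^n-q^i$ choices for row $i+1$. Hence
$$|\GL(n,q)|=\prod_{i=0}^{n-1}\big(q^n-q^i\big)=q^{n^2}\prod_{i=1}^{n}\big(1-q^{-i}\big),$$
so that
$$\frac{|\GL(n,q)|}{|\GLS(n,q)|}=\prod_{i=1}^{n}\big(1-q^{-i}\big).$$
This is a decreasing sequence in $n$, since each newly introduced factor $1-q^{-i}$ lies strictly between $0$ and $1$.

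Next I would show this sequence has a strictly positive limit. Since $q\ge 2$, the series $\sum_{i=1}^{\infty}q^{-i}=1/(q-1)$ converges, and using the elementary inequality $\log(1-x)\ge -2x$ for $0\le x\le \tfrac12$ one gets, for every $n$,
$$\sum_{i=1}^{n}\log\big(1-q^{-i}\big)\ge -2\sum_{i=1}^{\infty}q^{-i}=\frac{-2}{q-1}\ge -2,$$
so the partial products are bounded below by $\ee^{-2}>0$. A bounded, decreasing sequence converges; call its limit $c(q)$. Then $c(q)\ge \ee^{-2}>0$, while $c(q)<1$ because already the first factor satisfies $1-q^{-1}<1$ and every subsequent factor is at most $1$. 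This yields $\lim_{n\to\infty}|\GL(n,q)|/|\GLS(n,q)|=c(q)$ with $0<c(q)<1$, as claimed; one may note in passing that $c(q)=\prod_{i\ge 1}(1-q^{-i})$ is the value at $x=q^{-1}$ of Euler's function $\prod_{i\ge 1}(1-x^i)$.

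There is essentially no serious obstacle here. The only point that needs more than routine verification is the positivity of the limiting product, which is the standard fact that $\prod_{i}(1+a_i)$ converges to a nonzero value whenever $\sum_i|a_i|<\infty$; the explicit logarithmic bound above makes this concrete and also shows the convergence is uniform enough to pass to the limit.
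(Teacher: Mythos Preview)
Your proof is correct and follows essentially the same approach as the paper: both compute the ratio as the partial product $\prod_{i=1}^{n}(1-q^{-i})$ and observe that the infinite product converges to a positive limit $c(q)$. Your argument is in fact more explicit about the positivity of the limit (via the logarithmic bound), whereas the paper simply asserts convergence and then records Euler's Pentagonal Numbers expansion of $c(q)$ for computational purposes.
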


\begin{proof}
\begin{eqnarray*}
|\GL(n,q)| &=& \prod_{k=1}^n(q^n-q^{n-k}) \\
&=& q^{n^2}\prod_{k=1}^n (1-q^{-k}) \\
&\ge& |\GLS(n,q)|\prod_{k\ge1}(1-q^{-k}).
\end{eqnarray*}
The infinite product converges to a limit $c(q)>0$. Euler's Pentagonal
Numbers Theorem \cite[Theorem 4.1.3]{hall} gives
$$c(q)=\sum_{k\in\mathbb{Z}}(-1)^kq^{-k(3k-1)/2}=1-q^{-1}-q^{-2}+q^{-5}
+q^{-7}-q^{-12}-\cdots,$$
a form handy for calculation. For example, $c(2)=0.288788095\ldots$. In
fact, $c(q)$ is an evaluation of Jacobi's theta-function.
\end{proof}

The other main difference here is that the kernel of a linear map of rank $k$
is the partition of the vector space into cosets of a $(n-k)$-dimensional
subspace $U$ (the ``kernel'' of the map in the usual sense of linear algebra),
and a $k$-dimensional subspace $W$ is a transversal for the kernel partition
if and only if $U\cap W=\{0\}$. So the linear analogue of a league is a pair
$(P,S)$, where $P$ is a set of $(n-k)$-dimensional subspaces and $S$ a set
of $k$-dimensional subspaces such that, for all $U\in P$ and $W\in S$, we
have $U\cap W\ne\{0\}$. The simplest construction of a league is to take an
$(n-1)$-dimensional subspace $H$ of $V$, and to take $S$ and $P$ to consist of
all subspaces of the appropriate dimension contained in $H$; or dually, take
a $1$-dimensional subspace $K$ of $V$, and to take $S$ and $P$ to be all the
subspaces of the appropriate dimension containing $K$.

For $1\le k\le n$, the number of maps of rank $k$ is
$$\left(\gauss{n}{k}{q}\right)^2|\GL(k,q)|.$$
Here $$\gauss{n}{k}{q}$$ is the Gaussian coefficient, the number
of $k$-dimensional subspaces of an $n$-dimensional vector space over $\GF(q)$.
This coefficient is a monic polynomial in $q$ of degree $k(n-k)$ with
non-negative
integer coefficients, so is at least $q^{k(n-k)}$. Using the fact that
$|\GL(k,q)|\ge c(q)q^{k^2}$, we see that the number of maps of rank $k=n-d$
is at least $c(q)q^{n^2-d^2}$. So the largest principal factors are at the top.

The league just described in the principal factor of rank $k$ contains
$$\gauss{n-1}{k}{q}\gauss{n-1}{k-1}{q}$$ pairs. We have
\begin{eqnarray*}
\gauss{n-1}{k}{q}\gauss{n-1}{k-1}{q}\Bigg/\left(\gauss{n}{k}{q}\right)^2
&=& \frac{(q^k-1)(q^{n-k}-1)}{(q^n-1)^2} \\
&\ge& (1-1/q)^2q^{-n}.
\end{eqnarray*}
Altogether, we obtain a chain of length at least
\begin{eqnarray*}
        l(\GLS(n,q)) &\ge (1- 1/q)^2 q^{-n} \sum_{k=0}^{n-1} \left( {n \brack
        k}_q \right)^2 |\GL(k,q)| - 1\\
	&= (1- 1/q)^2 q^{-n}(|\GLS(n,q)| - |\GL(n,q)|) - 1.
\end{eqnarray*}
By Proposition~\ref{p:ordergl}, we have
$$
	|\GLS(n,q)| - |\GL(n,q)| \ge q^{n^2}(1-c(q)-o(1)),
$$
where the $o(1)$ is for fixed $q$ as $n\to\infty$. We obtain:

\begin{theorem}
$l(\GLS(n,q)) \ge (1-c(q)-o(1))(1- 1/q)^2 q^{-n} |\GLS(n,q)|$.
\end{theorem}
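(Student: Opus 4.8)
The plan is to collect the estimates established in the run-up to the statement and feed them through Lemma~\ref{lemma-regular}. Since $\GLS(n,q)$ is regular, with $\J$-classes $J_0,\dots,J_n$ indexed by the rank $k$, that lemma gives
$$l(\GLS(n,q))=\sum_{k=0}^n l(J_k^*)-1\ge\sum_{k=1}^{n-1}l(J_k^*)-1,$$
so it suffices to bound $l(J_k^*)$ from below for each $k$ with $1\le k\le n-1$; the two dropped boundary terms are non-negative and will turn out to be negligible.

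For such a $k$, I would fix an $(n-1)$-dimensional subspace $H$ of $V$, take $P$ to be the set of all $(n-k)$-dimensional subspaces of $H$ and $S$ the set of all $k$-dimensional subspaces of $H$. Any $U\in P$ and $W\in S$ lie in $H$, so $\dim(U\cap W)\ge\dim U+\dim W-(n-1)=1$; hence $(P,S)$ is a league, of content $\gauss{n-1}{k}{q}\gauss{n-1}{k-1}{q}$ (using $\gauss{n-1}{n-k}{q}=\gauss{n-1}{k-1}{q}$). Exactly as in the transformation case, the maps of rank $k$ whose kernel lies in $P$ and whose image lies in $S$, together with $0$, form a null subsemigroup of $J_k^*$; since each relevant $\mathscr{H}$-class is a copy of $\GL(k,q)$, this subsemigroup has order $1+\gauss{n-1}{k}{q}\gauss{n-1}{k-1}{q}\,|\GL(k,q)|$, and a null semigroup of order $N$ has length $N-1$. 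Thus $l(J_k^*)\ge\gauss{n-1}{k}{q}\gauss{n-1}{k-1}{q}\,|\GL(k,q)|$. Combining this with the ratio bound
$$\frac{\gauss{n-1}{k}{q}\gauss{n-1}{k-1}{q}}{\left(\gauss{n}{k}{q}\right)^2}=\frac{(q^k-1)(q^{n-k}-1)}{(q^n-1)^2}\ge(1-1/q)^2q^{-n}$$
(which follows from $q^j-1\ge q^j(1-1/q)$ and $q^n-1\le q^n$) and the fact that the number of rank-$k$ maps is $N_k=\left(\gauss{n}{k}{q}\right)^2|\GL(k,q)|$, I get $l(J_k^*)\ge(1-1/q)^2q^{-n}N_k$ for each $k$ in range.

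Summing over $1\le k\le n-1$ and using $\sum_{k=1}^{n-1}N_k=|\GLS(n,q)|-|\GL(n,q)|-1$ yields
$$l(\GLS(n,q))\ge(1-1/q)^2q^{-n}\bigl(|\GLS(n,q)|-|\GL(n,q)|\bigr)-(1-1/q)^2q^{-n}-1.$$
Finally, the computation in the proof of Proposition~\ref{p:ordergl} gives $|\GL(n,q)|=q^{n^2}\prod_{k=1}^n(1-q^{-k})$, and since the partial products decrease to $c(q)$ we have $|\GL(n,q)|=q^{n^2}(c(q)+o(1))$, hence $|\GLS(n,q)|-|\GL(n,q)|=q^{n^2}(1-c(q)-o(1))$. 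Substituting this, recalling $q^{n^2}=|\GLS(n,q)|$, and absorbing the additive constants into the $o(1)$ factor (legitimate since $(1-1/q)^2q^{n^2-n}\to\infty$) gives the claimed bound.

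There is no genuine obstacle here: the theorem is essentially a repackaging of the estimates that precede it. The only point requiring care is the asymptotic bookkeeping — confirming that the $o(1)$ is uniform for fixed $q$ as $n\to\infty$, and that discarding the $k=0$ and $k=n$ contributions, the additive $-1$, and the $(1-1/q)^2q^{-n}$ correction term all cost nothing in the limit.
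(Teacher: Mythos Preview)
Your proof is correct and follows essentially the same route as the paper: the same league construction inside a hyperplane $H$, the same ratio estimate $(q^k-1)(q^{n-k}-1)/(q^n-1)^2\ge(1-1/q)^2q^{-n}$, the same appeal to Proposition~\ref{p:ordergl} for the asymptotic $|\GLS(n,q)|-|\GL(n,q)|=(1-c(q)-o(1))q^{n^2}$, and Lemma~\ref{lemma-regular} to sum over principal factors. If anything you are slightly more careful than the paper about the boundary terms $k=0,n$ and about absorbing the additive constants into the $o(1)$.
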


\section{Inverse semigroups}\label{section-inverse}

An \emph{inverse semigroup} is a semigroup $S$ such that for all $x\in S$, there
exists a unique $x^{-1}\in S$ where $xx^{-1}x=x$ and $x^{-1}xx^{-1} = x$.  The
\emph{symmetric inverse monoid} consists of the injective functions between
subsets of a fixed set $X$. It is the analogue of the symmetric group in the
context of inverse semigroups i.e.\ every inverse semigroup is isomorphic to an
inverse subsemigroup of some symmetric inverse monoid.

The length of the symmetric inverse monoid on any finite set was determined in
\cite{Ganyushkin2011aa}. However, the main theorem of \cite{Ganyushkin2011aa}
holds for arbitrary finite inverse semigroups, and the proof is essentially that
given in \cite{Ganyushkin2011aa}. We state the theorem in its full generality,
and give a slightly different proof from that in \cite{Ganyushkin2011aa}, which
makes use of the description of the maximal subsemigroups of a Rees matrix
semigroup given in \cite{Graham1968aa}.

Let $G$ be a group and let $n\in \mathbb{N}$. Then the \emph{Brandt semigroup}
$B(G, n)$ has elements $\left(\{1,\ldots, n\}\times G\times \{1,\ldots,
n\}\right)\cup \{0\}$ with multiplication defined by 
\begin{equation*}
  (i,g,j)(k,h,l)=
  \begin{cases}
    (i, gh, l)  & \text{if }j = k     \\
    0           & \text{if }j \not= k
  \end{cases}
\end{equation*}
and $0x=x0=0$ for all $x\in B(G, n)$. 

It follows from the Rees Theorem \cite[Theorem 5.1.8]{Howie1995aa} that the
principal factor of a $\J$-class $J$ of a finite inverse semigroup $S$ is
isomorphic to $B(G, n)$ where $G$ is any maximal subgroup of $S$ contained in
$J$ and $n$ is the number of $\mathscr{L}$- and $\mathscr{R}$-classes of $J$. 
Every inverse semigroup is regular, and so, to calculate the length of an
inverse semigroup, it suffices, by Lemma~\ref{lemma-regular}, to work out the
length of a Brandt semigroup.

\begin{prop}\label{prop-brandt}
  Let $G$ be a group and let $n\in \N$. Then:  
  \begin{equation}\label{formula}
    \begin{aligned}
      l(B(G,n)) & = n(l(G)+1)+\frac{n(n-1)}{2}|G|+n-1\\
                & = n(l(G)+2)+\frac{n(n-1)}{2}|G|-1.  
    \end{aligned}
  \end{equation}
  \label{thm-brandt}
\end{prop}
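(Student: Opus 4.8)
The plan is to induct on $n$, peeling off one $\mathscr{L}$-class (equivalently one $\mathscr{R}$-class, by symmetry of the index set) at a time and using Proposition~\ref{prop-ideal} together with the classification of maximal subsemigroups of a Rees matrix semigroup from \cite{Graham1968aa}. First I would record the base case $n=1$: here $B(G,1)$ is just $G$ with a zero adjoined, which is an ideal extension of the trivial ideal $\{0\}$ by $B(G,1)$ itself; more directly, $l(B(G,1)) = l(G) + 1$ since adjoining a zero to a semigroup increases the length by exactly $1$ (the zero generates a minimal ideal $\{0\}$ with $l(\{0\})=0$, and $B(G,1)/\{0\}\cong G^0$; one checks $l(G^0)=l(G)+1$ by observing every proper subsemigroup either contains $0$ or does not, and those containing $0$ are exactly $H\cup\{0\}$ for $H$ a subsemigroup of $G$). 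This matches the formula: $1\cdot(l(G)+1) + 0 + 1 - 1 = l(G)+1$.

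For the inductive step, the key observation is that the sets $I_j = \{(i,g,k) : i,k \le j\}\cup\{0\}$... wait — more usefully, fixing an index, say index $n$, the set $I = \{(i,g,k)\in B(G,n) : i \ne n \text{ and } k\ne n\}\cup\{0\}$ is an ideal isomorphic to $B(G,n-1)$, and the Rees quotient $B(G,n)/I$ is a small semigroup whose length I must compute. By Proposition~\ref{prop-ideal}, $l(B(G,n)) = l(B(G,n-1)) + l(B(G,n)/I)$, so it suffices to show $l(B(G,n)/I) = l(G)+1 + (n-1)|G|$, which added to the inductive value $(n-1)(l(G)+1) + \binom{n-1}{2}|G| + (n-2)$ gives exactly $n(l(G)+1) + \binom{n}{2}|G| + (n-1)$ after checking $\binom{n-1}{2}+(n-1) = \binom{n}{2}$. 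The quotient $B(G,n)/I$ consists of $0$ together with all triples having $n$ as first or second coordinate: a single ``new'' $\mathscr{H}$-class $H_{nn}\cong G$ at position $(n,n)$, the $\mathscr{L}$-class pieces $(i,g,n)$ and $\mathscr{R}$-class pieces $(n,g,k)$ for $i,k<n$, with all products landing outside (hence $0$) except those involving the $(n,n)$ block. To get its length I would build a chain explicitly: start from the trivial subsemigroup, ascend a chain of length $l(G)$ inside $H_{nn}\cup\{0\}$, then adjoin the ``arms'' one element at a time — each of the $2(n-1)|G|$ off-diagonal elements, when adjoined in a suitable order (e.g. all $(n,g,k)$ first, then all $(i,g,n)$), extends the current subsemigroup by one — but one must be careful that adjoining a single $(i,g,n)$ together with the full $(n,n)$-group actually forces in the whole $\mathscr{L}$-related set $\{(i,h,n):h\in G\}$, since $(i,g,n)\cdot(n,h,n) = (i,gh,n)$. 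So the arms come in chunks of size $|G|$, not size $1$, and the contribution is $(l(G)+1) + (n-1)|G|$: one for the zero, $l(G)$ inside the group, and $(n-1)$ jumps of... no — here is where the counting needs care, and this is the main obstacle.

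The hard part will be pinning down $l(B(G,n)/I)$ — more specifically, handling the interaction between the off-diagonal elements and the maximal-subsemigroup structure, and making sure the upper bound matches the lower bound. For the upper bound I would invoke \cite{Graham1968aa}: a maximal subsemigroup of a $0$-simple Rees matrix semigroup over a group either omits a whole row-and-column index (dropping to $B(G,n-1)$-type), or is of the form where the group $G$ in some $\mathscr{H}$-class is replaced by a maximal subgroup (contributing the $l(G)$ term), or retains all indices but restricts the triples compatibly with a subgroup — and by induction on the order of the subsemigroup, any maximal chain must pass through one of these, decrementing the appropriate parameter. Assembling this into the clean recursion $l(B(G,n)) - l(B(G,n-1)) = l(G)+1+(n-1)|G|$ and then summing the telescoping series $\sum_{j=1}^{n}\bigl(l(G)+1+(j-1)|G|\bigr) = n(l(G)+1) + |G|\binom{n}{2}$ — wait, that gives $l(B(G,n)) = n(l(G)+1) + \binom{n}{2}|G|$, off by $n-1$ from the claimed formula; the discrepancy is exactly the $n-1$ accumulated zeros that get identified across the Rees quotients, so the bookkeeping of how many ``$+1$ for a zero'' survive is the delicate point I would need to get exactly right. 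The second displayed equality in \eqref{formula} is then just the algebraic identity $n(l(G)+1) + n = n(l(G)+2)$, trivial once the first line is established.
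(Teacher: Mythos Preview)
Your approach has a genuine gap at the very first step of the induction: the set you call $I$, namely $\{(i,g,k) : i\ne n,\ k\ne n\}\cup\{0\}$, is \emph{not} an ideal of $B(G,n)$. Indeed $B(G,n)$ is $0$-simple---all nonzero elements are $\J$-related---so its only ideals are $\{0\}$ and $B(G,n)$ itself. Concretely, for $n\ge 2$ take $(1,g,1)\in I$ and multiply on the left by $(n,e,1)$ to get $(n,g,1)\notin I$. Since $I$ is not an ideal, the Rees quotient $B(G,n)/I$ is undefined and Proposition~\ref{prop-ideal} does not apply. The off-by-$(n-1)$ discrepancy you noticed at the end is a symptom of this structural error, not a bookkeeping slip about zeros.

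The paper's proof avoids this by working from the top down rather than by peeling off an ideal: it uses the Graham--Graham--Rhodes classification of maximal subsemigroups of $B(G,n)$, which are either $B(H,n)$ for $H$ a maximal subgroup of $G$, or $B(G,n)\setminus(J\times G\times K)$ where $J$ and $K$ partition the index set. The latter subsemigroup \emph{is} amenable to Proposition~\ref{prop-ideal}: inside it, $(K\times G\times J)\cup\{0\}$ is a null ideal of length $|J|\,|K|\,|G|$, and the Rees quotient is regular with principal factors isomorphic to $B(G,|J|)$ and $B(G,|K|)$, to which the inductive hypothesis applies. A comparison then shows that type~(ii) always yields at least as long a chain as type~(i), and the formula follows. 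So the decomposition you are reaching for does exist, but one level down---inside a maximal subsemigroup of $B(G,n)$, not inside $B(G,n)$ itself.
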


\begin{proof}
  We proceed by induction on $n$ and $|G|$. If $n=1$, then 
  $l(B(G, n))=l(G)+1$ and (\ref{formula}) holds. 
  
  Let $n\in \mathbb{N}$, $n>1$, and let $G$ be a finite group. Suppose that if
  either: ($m<n$ and $|H|=|G|$) or ($m=n$ and $|H|<|G|$), then 
  $$l(B(H, m))=m(l(H)+1)+\frac{m(m-1)}{2}|H|+m-1.$$
  We will show that (\ref{formula}) holds for $n$ and $G$.
 
  Remark 1 of \cite{Graham1968aa} implies that a maximal subsemigroup of $B(G,
  n)=(I\times G\times I)\cup \{0\}$ is isomorphic to either:
  \begin{enumerate}[(i)]
    \item $B(H, n)$ where $H$ is a maximal subgroup of $G$; or
    \item $B(G, n)\setminus (J\times G \times K)$ where $J$ and $K$ partition $I$. 
  \end{enumerate}
  (This is also shown directly in Theorem 6 of \cite{Ganyushkin2011aa}.)
  The semigroups of type (ii) are always maximal, while the ones in part (i)
  may or may not be.  It follows that either 
  $$l(B(G, n))=1+l(B(H, n))$$
  for some maximal subgroup $H$ of $G$, or 
  $$l(B(G, n))=1+l(B(G, n)\setminus (J\times G \times K))$$
  where $J$ and $K$ partition $I$. 

  In the latter case, 
  $$B(G, n)\setminus (J\times G \times K)=(J\times G\times J)\cup (K\times
  G\times K)\cup (K\times G\times J)\cup \{0\}.$$
  It is routine to verify that 
  $$(J\times G\times J)\cup\{0\}\cong B(G, |J|)\quad\hbox{ and }\quad (K\times
  G\times K)\cup \{0\}\cong B(G, |K|)$$
  and that 
  $$(K\times G\times J)\cup \{0\}$$  
  is a null ideal of $B(G, n)\setminus (J\times G \times K)$.  
  Thus, by applying Proposition~\ref{prop-ideal} to the null ideal and
  Lemma~\ref{lemma-regular} to the (regular!) quotient of $B(G, n)\setminus
  (J\times G \times K)$ by the ideal, it
  follows that 
  $$l(B(G, n)\setminus (J\times G \times K))=
  l(B(G, |J|))+l(B(G, |K|))+l(K\times G\times J\cup \{0\}).$$
  Since every non-empty subset of $(K\times G\times J)\cup \{0\}$ is a
  subsemigroup, it follows that
  $$l(K\times G\times J)\cup \{0\})=|J||K||G|$$
  and so by induction that 
  $$l(B(G, n)\setminus (J\times G \times K))=n(l(G)+1)+\frac{n(n-1)}{2}|G|+n-2.$$

  By the second part of the inductive hypothesis 
  \begin{eqnarray*}
    l(B(H, n))&=&n(l(H)+1)+\frac{n(n-1)}{2}|H|+n-1\\
    &\leq&n(l(G)+1)+\frac{n(n-1)}{2}|G|+n-2\\
    &=&l(B(G, n)\setminus (J\times G \times K)).
  \end{eqnarray*}
 Thus, when we are constructing a chain of semigroups, if we have a choice
between semigroups of types (i) or (ii), we should choose type (ii) to
obtain the longest possible chain. We conclude that
  $$l(B(G, n))=1+l(B(G, n)\setminus (J\times G \times K))$$
  and (\ref{formula}) holds. 
\end{proof}

The following result for inverse semigroups now follows immediately from
Lemma~\ref{lemma-regular} and Proposition~\ref{thm-brandt}.

\begin{theorem}[cf. Theorem 7 in \cite{Ganyushkin2011aa}]
  Let $S$ be a finite inverse semigroup with $\J$-classes $J_1, \ldots, J_m$.
  If $n_i\in \mathbb{N}$ denotes the number of $\mathscr{L}$- and
  $\mathscr{R}$-classes in $J_i$, and $G_i$ is any maximal subgroup of $S$
  contained in $J_i$, then
  \begin{eqnarray*}
    l(S) &=& -1+\sum_{i=1}^{m} l(B(G_i, n_i))\\
    &=& -1+\sum_{i=1}^m n_i(l(G_i)+1)+\frac{n_i(n_i-1)}{2}|G_i|+(n_i-1).
  \end{eqnarray*}
  \label{thm-inverse}
\end{theorem}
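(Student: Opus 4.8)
The plan is to obtain Theorem~\ref{thm-inverse} as a direct consequence of the two structural results already established. By hypothesis, $S$ is a finite inverse semigroup with $\J$-classes $J_1, \ldots, J_m$; in particular, every inverse semigroup is regular, so Lemma~\ref{lemma-regular} applies and gives
\begin{equation*}
  l(S) = \sum_{i=1}^m l(J_i^*) - 1,
\end{equation*}
where $J_i^*$ is the principal factor of $J_i$. The first step is then simply to identify each $J_i^*$: by the Rees Theorem (as recalled in the text immediately before Proposition~\ref{prop-brandt}), the principal factor of a $\J$-class $J_i$ of a finite inverse semigroup is isomorphic to the Brandt semigroup $B(G_i, n_i)$, where $G_i$ is any maximal subgroup contained in $J_i$ and $n_i$ is the number of $\mathscr{L}$-classes of $J_i$ (which, in an inverse semigroup, equals the number of $\mathscr{R}$-classes of $J_i$).

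The second step is to substitute the closed formula for $l(B(G_i,n_i))$ obtained in Proposition~\ref{thm-brandt}. This yields
\begin{equation*}
  l(S) = -1 + \sum_{i=1}^m l(B(G_i, n_i)) = -1 + \sum_{i=1}^m \left( n_i(l(G_i)+1) + \frac{n_i(n_i-1)}{2}|G_i| + (n_i-1) \right),
\end{equation*}
which is exactly the asserted identity. One small point worth a sentence of justification is the well-definedness of the right-hand side: the formula refers to ``any maximal subgroup $G_i$'' of $J_i$, so I would note that all maximal subgroups contained in a single $\J$-class of a finite semigroup are isomorphic (they are the $\mathscr{H}$-classes containing idempotents, all isomorphic by Green's lemma), hence $l(G_i)$ and $|G_i|$ do not depend on the choice.

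Honestly, there is no real obstacle here: both ingredients — Lemma~\ref{lemma-regular} and Proposition~\ref{thm-brandt} — have already been proved, and the theorem is their concatenation. The only thing requiring care is bookkeeping: making sure the ``$-1$'' from Lemma~\ref{lemma-regular} and the ``$-1$'' internal to each Brandt formula are not conflated, and that $n_i$ is correctly read as the common number of $\mathscr{L}$- and $\mathscr{R}$-classes. So the write-up will be short — essentially a two-line derivation preceded by the remark on well-definedness — which matches the paper's own phrasing that the result ``now follows immediately.''
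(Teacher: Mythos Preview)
Your proposal is correct and matches the paper's own reasoning exactly: the paper states that the theorem ``now follows immediately from Lemma~\ref{lemma-regular} and Proposition~\ref{thm-brandt},'' which is precisely the two-step derivation you describe. The additional remark on well-definedness of $G_i$ is a nice touch but not required, since the paper has already noted (just before Proposition~\ref{prop-brandt}) that the principal factor is isomorphic to $B(G,n)$ for \emph{any} maximal subgroup $G$ contained in $J$.
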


Given a specific inverse semigroup $S$, Theorem~\ref{thm-inverse} gives a
formula for $l(S)$ in terms of the numbers $n_i$ of $\L$- and $\R$-classes and
the lengths of the maximal subgroups $G$ of the $\J$-classes of $S$. Thus to
determine the length of a particular semigroup, it suffices to determine
these values. 

For example, if $I_n$ denotes the symmetric inverse monoid on an $n$-element set
and $x,y\in I_n$, then $x\J y$ if and only if the size of the domain of $x$ is
equal to the size of the domain of $y$; \cite[Exercise 5.11.2]{Howie1995aa}.
Hence the number of $\J$-classes in $I_n$ is $n+1$, corresponding to the
possible sizes of subsets of $\{1,\ldots, n\}$. If $J$ is the $\J$-class of
$I_n$, consisting of partial permutations defined on $i$ points, then the
number of $\L$- and $\R$-classes in $J$ is ${n\choose i}$ and every maximal
subgroup of $J$ is isomorphic to the symmetric group $S_i$ on $i$ points. So, in
the formula in Theorem~\ref{thm-inverse}, $m=n+1$, $n_i={n \choose i-1}$ and
$G_i=S_{i-1}$, so we have
$$l(I_n)=-1+\sum_{i=1}^{n+1}{n\choose i-1}(l(S_{i-1})+2)+
{n\choose i-1}\left({n\choose i-1}-1\right)\frac{(i-1)!}{2}-1,$$
where the values of $l(S_{i-1})$ for $i>1$ are given by
Theorem~\ref{symmetric_theorem} and $l(S_0)=0$.  The first few values of
$l(I_n)$ are given in Table~\ref{fig-inverse}, for further terms see
\cite{sloane1}. 

Three further examples are: the dual symmetric inverse monoid $I_n^*$ where
$m=n$, $n_i$ is the Stirling number of the second kind $S(n,i)$, and $G_i=S_i$;
see \cite[Theorem 2.2]{Fitzgerald1998aa}, Table~\ref{fig-inverse}, and
\cite{sloane2}; the partial injective order-preserving mappings
$POI_n$ on an $n$-element chain where $m=n+1$, $n_i={n\choose i-1}$, and $G_i$
is trivial; see \cite{Fernandes2001aa}, Table~\ref{fig-inverse}, and
\cite{sloane3}; the partial injective orientation-preserving
mappings $POPI_n$ on an $n$-element chain where $m=n+1$, $n_i={n\choose i-1}$,
and $G_i$ is the cyclic group with $i$ elements when $i>0$ and the trivial group
when $i=0$; see \cite{Fernandes2000aa}, Table~\ref{fig-inverse}, and
\cite{sloane4}.

\begin{table}[ht]
  \begin{center}
    \begin{tabular}{|c|c|c|c|c|c|c|c|c|c|}\hline
      $n$   &1&2&3&4&5&6&7&8&9\\\hline
      $l(I_n)$   &1& 6& 25& 116& 722& 5956& 59243& 667500& 8296060\\\hline
      $l(I_n^*)$ &0& 2& 17& 180& 3298& 88431& 3064050& 130905678&
      6732227475\\\hline
      $l(POI_n)$& 1& 5& 17& 53& 167& 550& 1899& 6809& 25067\\\hline
      $l(POPI_n)$&1& 6& 24& 92& 363& 1483& 6191& 26077& 109987\\\hline
    \end{tabular}
    \caption{The length of the longest chain of non-empty proper subsemigroups
    of some well-known inverse semigroups.} \label{fig-inverse}
  \end{center}
\end{table}

We consider the asymptotic value of $l(I_n)$ compared to $|I_n|$.

\begin{theorem}
  If $S$ is any of the symmetric inverse monoid $I_n$, the dual symmetric
  inverse monoid $I_n^*$, the partial order-preserving injective mappings
  $POI_n$, the partial orientation-preserving injective mappings $POPI_n$, then 
  $$\lim_{n\to\infty}\frac{l(S)}{|S|}=\frac{1}{2}.$$
\end{theorem}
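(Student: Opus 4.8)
The plan is to treat all four semigroups uniformly via the formula from Theorem~\ref{thm-inverse}, which expresses $l(S)$ as a sum over $\J$-classes of terms $n_i(l(G_i)+2)+\binom{n_i}{2}|G_i| + (n_i-1)$, and to show that in each case the dominant contribution comes from the quadratic term $\frac{1}{2}\sum_i n_i^2|G_i|$. First I would observe that for each of the four families there is a clean description of $|S|$ as a sum over $\J$-classes: namely $|S| = \sum_i n_i^2 |G_i|$, since a $\J$-class with $n_i$ $\L$-classes, $n_i$ $\R$-classes and maximal subgroup $G_i$ has exactly $n_i^2|G_i|$ elements (this is the Rees/Brandt structure). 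Thus the statement $l(S)/|S|\to 1/2$ amounts to showing
$$
\frac{\sum_i\left[n_i(l(G_i)+2)+\frac{n_i(n_i-1)}{2}|G_i|+(n_i-1)\right] - 1}{\sum_i n_i^2|G_i|}\longrightarrow\frac12,
$$
i.e. that $\sum_i \frac{n_i^2}{2}|G_i|$ is the leading term and everything else is $o(|S|)$.

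The key steps, carried out for each family in turn, are: (1) identify $n_i$ and $G_i$ explicitly (for $I_n$: $n_i=\binom{n}{i-1}$, $G_i=S_{i-1}$; for $I_n^*$: $n_i=S(n,i)$, $G_i=S_i$; for $POI_n$: $n_i=\binom{n}{i-1}$, $G_i$ trivial; for $POPI_n$: $n_i=\binom{n}{i-1}$, $G_i=C_i$); (2) confirm $|S|=\sum_i n_i^2|G_i|$ in each case, matching the known orders $|I_n|=\sum\binom{n}{i}^2 i!$, $|I_n^*|=\sum S(n,i)^2 i!$, $|POI_n|=\sum\binom{n}{i}^2=\binom{2n}{n}$, $|POPI_n|=\sum_i i\binom{n}{i}^2$ plus the $i=0$ correction; (3) bound the error. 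For the error, note $\frac{n_i(n_i-1)}{2}|G_i| = \frac{n_i^2|G_i|}{2} - \frac{n_i|G_i|}{2}$, so collecting terms the numerator is $\frac12|S| + \sum_i\left(n_i l(G_i) + \frac{3n_i}{2} - \frac{n_i|G_i|}{2} - 1\right) - 1$, and we must show the correction sum is $o(|S|)$. Using $l(G_i)=O(|G_i|)$ trivially (indeed $l(G_i)\le \log_2|G_i|$ for the group $G_i$), the term $\sum_i n_i l(G_i)$ and $\sum_i n_i |G_i|$ are each dominated, via Cauchy--Schwarz or a direct term-by-term comparison, by $\left(\max_i n_i\right)^{-1/2}$-type factors times $\sum_i n_i^2|G_i|$ — more simply, since $\max_i n_i\to\infty$ and the $n_i^2|G_i|$ grow geometrically, $\sum_i n_i|G_i| = o(\sum_i n_i^2|G_i|)$ because the ratio of consecutive dominant terms blows up.

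The main obstacle I anticipate is making the error estimate genuinely uniform and rigorous in all four cases simultaneously, since the sequences $n_i$ behave differently: for $POI_n$ the subgroups are trivial so $|S|=\binom{2n}{n}\sim 4^n/\sqrt{\pi n}$ and one needs $\sum_i n_i = 2^n = o(\binom{2n}{n})$, which is immediate; but for $I_n$ and especially $I_n^*$ the subgroups $S_i$ are large and the binomial/Stirling coefficients concentrate, so one must check that $\sum_i n_i|G_i| = \sum_i \binom{n}{i-1}(i-1)!$ (resp. $\sum_i S(n,i)\,i!$, which is the ordered Bell number) is small compared to $\sum_i \binom{n}{i-1}^2(i-1)!$ (resp. $\sum_i S(n,i)^2 i!$). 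This follows because squaring $n_i$ and keeping the same $|G_i|$ shifts the mass and multiplies the sum by a super-polynomial factor, but writing this cleanly — e.g. by locating the index $i$ maximising each summand and comparing — is the one place where care is needed. Once that domination is established for each family, dividing through gives the limit $1/2$ and completes the proof.
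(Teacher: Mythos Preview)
Your overall strategy is exactly the paper's: apply Theorem~\ref{thm-inverse}, use $|S|=\sum_i n_i^2|G_i|$, pull out $\tfrac12|S|$, and show the remaining terms are $o(|S|)$. The paper, like you, only writes out the $I_n$ case in detail and declares the others similar.

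Where your sketch wobbles is the mechanism for bounding the error $\sum_i n_i|G_i|$ against $\sum_i n_i^2|G_i|$. Cauchy--Schwarz and the $(\max_i n_i)^{-1/2}$ remark do not do the job: Cauchy--Schwarz gives $\sum_i n_i|G_i|\le(\sum_i n_i^2|G_i|)^{1/2}(\sum_i|G_i|)^{1/2}$, and there is no reason $\sum_i|G_i|$ should be small compared to $|S|^{1/2}$ in general. Nor does $\max_i n_i\to\infty$ by itself suffice, since the extremal indices (e.g.\ $i=n$ in $I_n$, where $n_i=1$ but $|G_i|=n!$) can carry a lot of the mass of $\sum_i n_i|G_i|$.

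The paper's actual argument is the term-by-term comparison you allude to, made precise as follows for $I_n$: for $1\le i\le n-1$ one has $\binom{n}{i}\ge n$, whence $\binom{n}{i}^2 i!\ge n\binom{n}{i}i!$ and so $|I_n|\ge n\sum_{i=2}^{n-1}\binom{n}{i}i!$. The boundary term $i=n$ is handled separately via $|I_n|\ge\binom{n}{n-1}^2(n-1)!=n\cdot n!$, giving $n!/|I_n|\le 1/n$. Together these yield $\sum_{i=2}^{n}\binom{n}{i}i!\le \tfrac{2}{n}|I_n|$, which is the missing estimate. The analogous ``$n_i\ge n$ for intermediate $i$, treat the ends by hand'' works in each of the other three families. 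So your plan is right; just replace the Cauchy--Schwarz suggestion with this direct comparison and isolate the boundary indices where $n_i$ is small.
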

\begin{proof}
  We present the proof in the case that $S=I_n$, the other proofs are
  similar.

  It is routine to check that 
  $$|I_n|=\sum_{i=0}^n {n\choose i}^2i!$$
  (see also \cite[Exercise 5.11.3]{Howie1995aa}). By Theorem~\ref{thm-inverse},
  \begin{eqnarray*}
    l(I_n)&=&-1+\sum_{i=0}^n \left[{n \choose i}(l(S_i)+1)+{n\choose i}\left({n
    \choose i}-1\right)\frac{i!}{2}+{n\choose i}-1\right]\\
    &=&\frac{|I_n|}{2}-1+\sum_{i=0}^n\left[ {n \choose i}(l(S_i)+1)-{n\choose
    i}\frac{i!}{2}+{n\choose i}-1\right]\\
    &=&\frac{|I_n|}{2}-n-2+\sum_{i=0}^n{n\choose
    i}\left[l(S_i)+2-\frac{i!}{2}\right]\\
    &=& \frac{|I_n|}{2} + \frac{n-1}{2}+\sum_{i=2}^n{n\choose
    i}\left[l(S_i)+2-\frac{i!}{2}\right].
  \end{eqnarray*}
  Note that, for $n\geq 1$,
  \begin{equation}\label{InEstimate}
    |I_n|\geq {n \choose n-1}^2(n-1)! = n \cdot n!
  \end{equation}
  and so to show that $l(I_n)$ is asymptotically $\frac{|I(n)|}{2}$ it suffices
  to show that the ratio of  
  $$\sum_{i=2}^n{n\choose i}\left[l(S_i)+2-\frac{i!}{2}\right]$$
  to $|I_n|$ tends to $0$ as $n\to\infty$. 
  By Theorem~\ref{symmetric_theorem}
  \begin{eqnarray*}
    \left| \sum_{i=2}^n{n\choose i}\left[l(S_i)+2-\frac{i!}{2}\right]\right| \leq
    \sum_{i=2}^n{n\choose i}\left[ \frac{3i}{2}+2+\frac{i!}{2}\right] 
    \leq \sum_{i=2}^n{n\choose i}i!
  \end{eqnarray*}
  Using the inequalities (\ref{InEstimate}) and
  \begin{equation*}
    |I_n|=\sum_{i=0}^n {n\choose i}^2i!\geq n\sum_{i=2}^{n-1} {n\choose i} i!
  \end{equation*}
  it follows that
  $$\frac{\sum_{i=2}^{n} {n\choose i} i!}{|I_n|}=
  \frac{n!}{|I_n|} + \frac{\sum_{i=2}^{n-1} {n\choose i} i!}{|I_n|}\leq
  \frac{2}{n} \rightarrow 0$$
  as $n\rightarrow \infty$ and the proof is complete.
\end{proof}

\subsection{Longest chains of inverse subsemigroups}

In this section we consider the question of determining the longest chains of
\emph{inverse} subsemigroups of a finite inverse semigroup. We define the
\textit{inverse subsemigroup length} of an inverse semigroup $S$ to be the
largest number of non-empty inverse subsemigroups of $S$ in a chain minus 1;
this is denoted $l^*(S)$. Since every group is an inverse semigroup, and every
subsemigroup of a finite group is a subgroup, if $G$ is a finite group, then
$l(G)=l^*(G)$. 

We will prove the following theorem. 

\begin{theorem}
  \label{thm-inverse-inverse}
  Let $S$ be a finite inverse semigroup with $\J$-classes $J_1, \ldots, J_m$.
  If $n_i\in \mathbb{N}$ denotes the number of $\mathscr{L}$- and
  $\mathscr{R}$-classes in $J_i$, and $G_i$ is any maximal subgroup of $S$
  contained in $J_i$, then
  \begin{eqnarray*}
    l^*(S) &=& -1+\sum_{i=1}^{m} l^*(B(G_i, n_i))\\
    &=& -1+\sum_{i=1}^m n_i(l(G_i)+1) + n_i - 1.
  \end{eqnarray*}
\end{theorem}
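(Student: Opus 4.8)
The plan is to mirror the strategy used for Theorem~\ref{thm-inverse}: reduce to the Brandt semigroup case via Lemma~\ref{lemma-regular}, then compute $l^*(B(G,n))$ by an induction on $n$ and $|G|$ that parallels the proof of Proposition~\ref{thm-brandt}, but tracking only \emph{inverse} subsemigroups. First I would observe that the reduction step is genuinely valid in the inverse setting: every inverse subsemigroup of $S$ is itself an inverse semigroup, hence regular, so the decomposition $S = I \cup (S/I)$ with $I$ an ideal interacts with inverse subsemigroups exactly as in Proposition~\ref{prop-ideal} and Lemma~\ref{lemma-regular}. One must check that the constructions appearing in those proofs (e.g. $U_\alpha \cup (U_{\alpha+1}\cap I)$, and the Rees-quotient transfer) take inverse subsemigroups to inverse subsemigroups, which holds because $I$ is an inverse subsemigroup and the union of an inverse subsemigroup with an ideal that is itself inverse is again inverse (the unique inverse of each element is unchanged). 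Granting this, $l^*(S) = -1 + \sum_{i=1}^m l^*(B(G_i,n_i))$, so everything comes down to the Brandt formula $l^*(B(G,n)) = n(l(G)+1) + n - 1$.

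For the Brandt computation I would again induct on the pair $(n,|G|)$. The base case $n=1$ gives $B(G,1)$, whose proper inverse subsemigroups are $\{0\}$ together with the subgroups of $G$ (with or without an adjoined $0$); since subsemigroups of a finite group are subgroups and $l^*(G)=l(G)$, we get $l^*(B(G,1)) = l(G)+1$, matching the formula. For the inductive step I would want the analogue of Graham's classification of maximal \emph{inverse} subsemigroups of $B(G,n)$: these should again be of type (i) $B(H,n)$ with $H$ maximal in $G$, or type (ii) $B(G,n)\setminus(J\times G\times K)$ with $\{J,K\}$ a partition of $I$ — but crucially, the ``null ideal'' trick that inflated the type-(ii) count in the ordinary case now contributes nothing, because a null subsemigroup of the form $(K\times G\times J)\cup\{0\}$ is almost never closed under taking inverses (the inverse of $(k,g,j)$ is $(j,g^{-1},k)$, which lies in $(J\times G\times K)$). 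So for inverse subsemigroups, deleting a block $J\times G\times K$ forces us to delete its transpose $K\times G\times J$ as well; the type-(ii) inverse subsemigroup is then $B(G,|J|)\cup B(G,|K|)$ glued at $0$. Applying Proposition~\ref{prop-ideal}/Lemma~\ref{lemma-regular} to this, together with the inductive hypothesis, should give $l^*(B(G,n)\setminus(\cdots)) = l^*(B(G,|J|)) + l^*(B(G,|K|)) = n(l(G)+1) + n - 2$, whence $l^*(B(G,n)) = 1 + n(l(G)+1) + n - 2$ as claimed. One also checks, as in the original proof, that the type-(ii) move beats the type-(i) move, so that the maximum is realised by a chain that descends through type-(ii) subsemigroups.

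The main obstacle I anticipate is establishing the classification of maximal inverse subsemigroups of $B(G,n)$ with the correct list — in particular ruling out ``exotic'' maximal inverse subsemigroups that have no analogue among ordinary subsemigroups, and confirming that no type-(i) family turns out to be non-maximal in a way that changes the count. A safe route is to prove directly that any proper inverse subsemigroup $T$ of $B(G,n)$ either misses some whole ``row-and-column'' pair $(J\times G\times I)\cup(I\times G\times J)$ for a nonempty $J\subsetneq I$ (in which case $T$ is contained in a type-(ii) subsemigroup), or meets every $\mathscr H$-class it can and the set of such $\mathscr H$-classes forces the nonzero part of $T$ to be a full Brandt semigroup over a proper subgroup $H<G$ — using the fact that in an inverse semigroup the idempotents $(i,1,i)$ must all lie in $T$ once $T$ contains any element in ``position $(i,i)$''. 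The other routine-but-necessary check is that all the ideal/quotient lemmas quoted above genuinely restrict to the lattice of inverse subsemigroups; I would state this as a short preliminary remark before the induction. Once those two points are nailed down, the arithmetic is identical in shape to the proof of Proposition~\ref{thm-brandt}, just with $|G|$-free lower-order terms, and the final summation over $\J$-classes is immediate from Lemma~\ref{lemma-regular}.
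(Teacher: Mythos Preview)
Your proposal is correct and follows essentially the same route as the paper: establish inverse analogues of Proposition~\ref{prop-ideal} and Lemma~\ref{lemma-regular}, reduce to computing $l^*(B(G,n))$, and run the same induction on $(n,|G|)$ comparing the two types of maximal step. The one place the paper is slightly slicker than your outline is the classification step you flag as the main obstacle: rather than classifying maximal \emph{inverse} subsemigroups of $B(G,n)$ from scratch, the paper simply reuses Graham's classification of maximal subsemigroups, observes that type~(i) is already inverse while type~(ii) is not even regular, and identifies $U=(J\times G\times J)\cup(K\times G\times K)\cup\{0\}$ as the (unique) maximal inverse subsemigroup sitting inside a type~(ii) maximal subsemigroup --- so no ``exotic'' cases need to be ruled out directly.
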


The proof is similar to the proof of Theorem~\ref{thm-inverse}.  We start by
proving analogues of Proposition~\ref{prop-ideal} and Lemma~\ref{lemma-regular}
for the inverse subsemigroup length, rather than length, of an inverse
semigroup.

To prove the analogue of Proposition~\ref{prop-ideal}, we require the following
facts about inverse semigroups. Let $S$ be an inverse semigroup, let $T$ and $U$
be inverse subsemigroups, and let $I$ be an ideal in $S$. Then the following are 
inverse semigroups: the ideal $I$, the quotient $S/I$, the intersection $T\cap
U$, and the union $T\cup I$. If $V$ is an inverse subsemigroup of $S/I$, then
$V\setminus\{0\} \cup I$ is an inverse subsemigroup of $S$. 

\begin{prop}
  \label{prop-ideal-inverse}
    Let $S$ be an inverse semigroup and let $I$ be an ideal of $S$. Then
    $l^*(S)=l^*(I)+l^*(S/I)$. 
\end{prop}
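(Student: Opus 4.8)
The plan is to mimic the proof of Proposition~\ref{prop-ideal} exactly, but working throughout with inverse subsemigroups and using the extra facts listed just before the statement. The overall structure will again be: prove $l^*(S)\ge l^*(I)+l^*(S/I)$ by splicing together an optimal chain in $I$ with an optimal chain in $S/I$, and then prove $l^*(S)\le l^*(I)+l^*(S/I)$ by showing any maximal chain of inverse subsemigroups can be assumed to satisfy the ``dichotomy'' that each step either avoids $I$ entirely or lies inside $I$.

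For the lower bound, suppose $\{U_\alpha\}$ is a chain of non-empty proper inverse subsemigroups of $I$ of length $l^*(I)$ and $\{V_\alpha\}$ one of length $l^*(S/I)$ in $S/I$. Since $I$ is an ideal of the inverse semigroup $S$, and since the union of an inverse subsemigroup of $S$ with $I$ is again an inverse subsemigroup (one of the stated facts), and since $V\setminus\{0\}\cup I$ is an inverse subsemigroup of $S$ whenever $V$ is an inverse subsemigroup of $S/I$, the concatenation
\begin{equation*}
  W_\alpha=
  \begin{cases}
    U_\alpha & \text{if }\alpha<l^*(I)\\
    (V_\beta\setminus\{0\})\cup I & \text{if }\alpha=l^*(I)+\beta<l^*(I)+l^*(S/I)
  \end{cases}
\end{equation*}
is a chain of $l^*(I)+l^*(S/I)$ proper inverse subsemigroups of $S$, giving $l^*(S)\ge l^*(I)+l^*(S/I)$. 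This is essentially verbatim the argument in Proposition~\ref{prop-ideal}, now justified by the closure properties of inverse subsemigroups rather than of subsemigroups.

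For the upper bound, take a maximal chain $\mathcal{C}=\{U_\alpha\}$ of non-empty proper inverse subsemigroups with $U_\alpha<U_{\alpha+1}$. The key point is that $U_\alpha\cup(U_{\alpha+1}\cap I)$ is again an inverse subsemigroup: $U_{\alpha+1}\cap I$ is the intersection of two inverse subsemigroups (note $I$ itself is an inverse subsemigroup), hence an inverse subsemigroup, and it is moreover an ideal of $U_{\alpha+1}$, so its union with the inverse subsemigroup $U_\alpha$ is an inverse subsemigroup of $U_{\alpha+1}$. Since it lies strictly between $U_\alpha$ and $U_{\alpha+1}$ in the non-strict sense, maximality of the chain (in the finite case; the transfinite case is handled by the same refinement trick as in Proposition~\ref{prop-ideal}) forces it to equal one of the two endpoints, which is exactly the dichotomy in~(\ref{dichotomy}). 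Assuming the dichotomy, $\{U_\alpha\cap I\}$ is a chain of non-empty inverse subsemigroups of $I$ and $\{U_\alpha/I\}$ a chain of non-empty proper inverse subsemigroups of $S/I$, and at each step exactly one of the two strictly increases, so $l^*(S)\le l^*(I)+l^*(S/I)$.

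The only genuinely new content beyond Proposition~\ref{prop-ideal} is verifying the stated closure facts for inverse subsemigroups — in particular that $U_{\alpha+1}\cap I$ is an inverse subsemigroup and forms an ideal of $U_{\alpha+1}$, and that $V\setminus\{0\}\cup I$ is an inverse subsemigroup of $S$ — but these are explicitly provided in the paragraph preceding the proposition, so the argument is a routine adaptation. The main (mild) obstacle is simply being careful that every object produced along the way is closed under taking inverses, rather than merely a subsemigroup; once that bookkeeping is in place, the proof goes through word for word as before.
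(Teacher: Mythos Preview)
Your proposal is correct and is exactly the approach the paper takes: the paper's proof is a one-line reference stating that the argument is analogous to Proposition~\ref{prop-ideal} using the closure facts listed in the preceding paragraph, and you have carefully written out precisely that analogous argument. In fact your version is more explicit than the paper's, and the only points needing care --- that $U_\alpha\cup(U_{\alpha+1}\cap I)$ is an inverse subsemigroup, and that the images $U_\alpha/I$ are inverse subsemigroups of $S/I$ --- are either covered by the stated facts or are standard (homomorphic images of inverse subsemigroups are inverse), so nothing is missing.
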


\begin{proof} 
  From the comments preceding the proposition, it is straightforward to verify
  that, the proof of this proposition follows by an argument analogous to that
  used to prove Proposition~\ref{prop-ideal}.
\end{proof}

The analogue of Lemma~\ref{lemma-regular}, follows as a corollary of
Proposition~\ref{prop-ideal-inverse} using the analogue of the proof of
Lemma~\ref{lemma-regular}.

\begin{cor} \label{cor-inverse}
  Let $S$ be a finite inverse semigroup and let $J_1, J_2, \ldots, J_m$ be the
  $\J$-classes of $S$. Then $l^*(S)=l^*(J_1^*)+l^*(J_2^*)+\cdots +l^*(J_m^*)-1$.
\end{cor}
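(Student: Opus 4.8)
The plan is to imitate the proof of Lemma~\ref{lemma-regular}, replacing Proposition~\ref{prop-ideal} by its inverse-semigroup analogue Proposition~\ref{prop-ideal-inverse} throughout, and observing that inverse semigroups are closed under passing to ideals and Rees quotients. Concretely, I would argue by induction on the number $m$ of $\J$-classes. Order the $\J$-classes so that $J_1$ is maximal in the $\leq_{\J}$ partial order; then $I = S\setminus J_1$ is an ideal of $S$, and by Proposition~\ref{prop-ideal-inverse} we get $l^*(S) = l^*(I) + l^*(S/I)$.

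Next I would dispose of the base/degenerate case: if $I = \emptyset$ then $S = J_1$ is itself an inverse semigroup whose principal factor $J_1^*$ is obtained by adjoining a zero, so $l^*(J_1^*) = l^*(J_1) + 1 = l^*(S) + 1$, giving the formula with $m=1$. If $I\ne\emptyset$, then the Rees quotient $S/I$ is isomorphic to $J_1^*$ — this is exactly the definition of the principal factor when $J_1$ is not minimal — so $l^*(S) = l^*(I) + l^*(J_1^*)$. It remains to check that $I$ is again a finite inverse semigroup with $\J$-classes precisely $J_2,\ldots,J_m$: the first because an ideal of an inverse semigroup is an inverse subsemigroup (as noted in the comments preceding Proposition~\ref{prop-ideal-inverse}), and the second because removing a maximal $\J$-class leaves the Green's $\J$-relation on the remaining elements unchanged (the same reasoning invoked via Proposition~A.1.16 of \cite{Rhodes2009aa} in the proof of Lemma~\ref{lemma-regular}, since inverse semigroups are regular). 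Applying the inductive hypothesis to $I$ yields $l^*(I) = l^*(J_2^*) + \cdots + l^*(J_m^*) - 1$, and substituting gives $l^*(S) = l^*(J_1^*) + l^*(J_2^*) + \cdots + l^*(J_m^*) - 1$, as required.

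The only point needing care — and the one I would expect to be the mildest obstacle — is confirming that the $\J$-class structure and the inverse-semigroup property really do descend to the ideal $I$ in a way that supports the induction; but both facts are standard (inverse semigroups are regular, ideals of regular semigroups are regular, and an ideal of an inverse semigroup is an inverse subsemigroup), so the argument is genuinely just the proof of Lemma~\ref{lemma-regular} with $l$ replaced by $l^*$ and Proposition~\ref{prop-ideal} replaced by Proposition~\ref{prop-ideal-inverse}. Since the excerpt already says the corollary "follows as a corollary of Proposition~\ref{prop-ideal-inverse} using the analogue of the proof of Lemma~\ref{lemma-regular}," I would keep the written proof short, essentially pointing to that parallel.
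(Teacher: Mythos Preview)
Your proposal is correct and is exactly what the paper intends: it explicitly states that the corollary follows from Proposition~\ref{prop-ideal-inverse} by the analogue of the proof of Lemma~\ref{lemma-regular}, and your write-up carries out precisely that analogue, including the same handling of the base case and the same appeal to regularity for the ideal $I$. There is nothing to add or correct.
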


As in the previous subsection, to calculate the inverse subsemigroup length of
an inverse semigroup, it suffices, by Corollary~\ref{cor-inverse}, to find the
inverse subsemigroup length of a Brandt semigroup.

\begin{prop}
  Let $G$ be a group and let $n\in \N$. Then:  
  \begin{equation}
    \label{inverse-formula}
    l ^ * (B(G,n)) = n (l(G) + 1) + n - 1 = n(l(G)+2)-1
  \end{equation}
\end{prop}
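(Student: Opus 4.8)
The plan is to mimic the proof of Proposition~\ref{thm-brandt}, performing induction on $n$ and $|G|$, but keeping track only of \emph{inverse} subsemigroups and exploiting the fact that the null ideal $(K\times G\times J)\cup\{0\}$, which contributed a large term $|J||K||G|$ in the previous proof, now contributes almost nothing. First I would dispose of the base case $n=1$: here $B(G,1)$ is just $G$ with an adjoined zero, its inverse subsemigroups are exactly the subgroups of $G$ together with (possibly) the zero, so $l^*(B(G,1)) = l^*(G) + 1 = l(G)+1$, matching (\ref{inverse-formula}).

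For the inductive step, I would set up the same hypothesis as before: (\ref{inverse-formula}) holds for all $B(H,m)$ with either $m<n$ or ($m=n$ and $|H|<|G|$). The key structural input is again the classification of maximal \emph{inverse} subsemigroups of $B(G,n)$. I expect that the maximal inverse subsemigroups are of the same two types as in the non-inverse case: (i) $B(H,n)$ for $H$ a maximal subgroup of $G$; and (ii) $B(G,n)\setminus(J\times G\times K)$ where $J,K$ partition $I=\{1,\dots,n\}$ — one should check that these sets are indeed \emph{inverse} subsemigroups (type (ii) is closed under the unary inverse operation since $(i,g,j)^{-1}=(j,g^{-1},i)$ and swapping the roles of the index blocks), and that every maximal inverse subsemigroup is of one of these forms. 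This is the step I expect to be the main obstacle: I would either cite Remark~1 of \cite{Graham1968aa} again (checking it respects the inverse structure) or give a short direct argument that any proper inverse subsemigroup is contained in one of these. Granting this, as in the previous proof one gets
\begin{equation*}
  l^*(B(G,n)) = 1 + \max\bigl\{\, l^*(B(H,n)),\ l^*(B(G,n)\setminus(J\times G\times K)) \,\bigr\}.
\end{equation*}

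Now I would analyse the type-(ii) term. Exactly as before, $B(G,n)\setminus(J\times G\times K) = (J\times G\times J)\cup(K\times G\times K)\cup(K\times G\times J)\cup\{0\}$, with $(J\times G\times J)\cup\{0\}\cong B(G,|J|)$, $(K\times G\times K)\cup\{0\}\cong B(G,|K|)$, and $(K\times G\times J)\cup\{0\}$ a null ideal. Applying the inverse-semigroup versions Proposition~\ref{prop-ideal-inverse} and Corollary~\ref{cor-inverse},
\begin{equation*}
  l^*\bigl(B(G,n)\setminus(J\times G\times K)\bigr) = l^*(B(G,|J|)) + l^*(B(G,|K|)) + l^*\bigl((K\times G\times J)\cup\{0\}\bigr).
\end{equation*}
The crucial difference: an inverse subsemigroup of the null semigroup $(K\times G\times J)\cup\{0\}$ must be closed under $(k,g,j)\mapsto(j,g^{-1},k)$, and since $J\cap K=\emptyset$ for $|J|,|K|\ge 1$ this element lies outside $(K\times G\times J)\cup\{0\}$ unless the subsemigroup is just $\{0\}$; hence $l^*((K\times G\times J)\cup\{0\}) = 0$ (for $n>1$ we may take $|J|,|K|\ge 1$). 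Choosing the split $|J|=n-1$, $|K|=1$ and using the inductive hypothesis on $l^*(B(G,n-1))$ and $l^*(B(G,1))=l(G)+1$, the type-(ii) term equals $(n-1)(l(G)+2) - 1 + (l(G)+1) + 0 = n(l(G)+2) - 2$. Meanwhile the inductive hypothesis (second part) gives $l^*(B(H,n)) = n(l(H)+2)-1 \le n(l(G)+1+2)-1\cdots$; more carefully, since $l(H)\le l(G)-1$ for $H$ a maximal subgroup (by Lagrange, $\Omega$ drops, or more simply since $H<G$ forces $l(H)<l(G)$ when... ) — here I would just note $l^*(B(H,n)) = n(l(H)+2)-1 \le n(l(G)+1)+n-2 = n(l(G)+2)-2$, so the type-(ii) value is at least as large. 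Therefore $l^*(B(G,n)) = 1 + (n(l(G)+2)-2) = n(l(G)+2)-1$, establishing (\ref{inverse-formula}) and completing the induction.
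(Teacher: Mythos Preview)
Your overall strategy and final computation are correct, but there is a genuine gap in the middle: the type (ii) subsemigroups $M := B(G,n)\setminus(J\times G\times K)$ are \emph{not} inverse subsemigroups. Take any $k\in K$ and $j\in J$ (both nonempty since $n>1$); the element $(k,g,j)$ lies in $M$ (it belongs to $K\times G\times J$, not to the deleted block $J\times G\times K$), but its inverse $(j,g^{-1},k)$ lies in $J\times G\times K$ and hence outside $M$. Thus $M$ is not closed under inverses---indeed the paper observes that $M$ is not even regular. Your phrase ``swapping the roles of the index blocks'' does not rescue this: swapping $J$ and $K$ produces a \emph{different} maximal subsemigroup, not closure of the given one.

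This error propagates: you invoke Proposition~\ref{prop-ideal-inverse} and Corollary~\ref{cor-inverse} on $M$, but both are stated only for inverse semigroups, so the decomposition $l^*(M) = l^*(B(G,|J|)) + l^*(B(G,|K|)) + l^*\bigl((K\times G\times J)\cup\{0\}\bigr)$ is unjustified as written. The fix is exactly what the paper does: inside $M$, the largest inverse subsemigroup is $U := (J\times G\times J)\cup(K\times G\times K)\cup\{0\}$ (your own observation that the only inverse subsemigroup of the null piece is $\{0\}$ is precisely why), and $U$ is a maximal inverse subsemigroup of $B(G,n)$. One then applies Corollary~\ref{cor-inverse} directly to the inverse semigroup $U$, obtaining $l^*(U)=l^*(B(G,|J|))+l^*(B(G,|K|))$ with no null-ideal term at all. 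With this correction to the list of maximal inverse subsemigroups, the rest of your induction and the comparison with the type-(i) value (using $l(H)=l(G)-1$ for a maximal subgroup $H$) go through unchanged.
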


\begin{proof}
  We proceed by induction on $n$ and $|G|$. If $n=1$, then 
  $l^*(B(G, n))=l(G)+1$ and (\ref{inverse-formula}) holds. 
  
  Let $n\in \mathbb{N}$, $n > 1$, and let $G$ be a finite group. Suppose that if
  either: ($m < n$ and $|H| = |G|$) or ($m = n$ and $|H| < |G|$), then
  $$l^*(B(H, m))=m(l(H) + 1) + m - 1.$$
  We will show that (\ref{inverse-formula}) holds for $n$ and $G$.
 
  As in the proof of Proposition~\ref{prop-brandt}, a maximal subsemigroup of
  $B(G, n)=(I\times G\times I)\cup \{0\}$ is isomorphic to either:
  \begin{enumerate}[(i)]
    \item $B(H, n)$ where $H$ is a maximal subgroup of $G$; or
    \item $B(G, n)\setminus (J\times G \times K)$ where $J$ and $K$ partition $I$. 
  \end{enumerate}
  The subsemigroups of type (i) are inverse subsemigroups, and hence maximal
  inverse subsemigroups. The subsemigroups of type (ii) are not regular
  semigroups, since
  $$B(G, n)\setminus (J\times G \times K)=(J\times G\times J)\cup (K\times
  G\times K)\cup (K\times G\times J)\cup \{0\},$$
  and $(K\times G\times J)\cup \{0\}$ is a null subsemigroup. It follows that
  $$U := (J\times G\times J)\cup (K\times G\times K)\cup \{0\}$$
  is a maximal inverse subsemigroup of $B(G, n)\setminus (J\times G \times K)$,
  and hence of $B(G, n)$.  Since the $\mathscr{J}$-classes of $U$ are 
  $J\times G\times J$, $K\times G\times K$, and $\{0\}$, by 
  Corollary~\ref{cor-inverse}, 
  $$l^*(U)= l^*(B(G, |J|)) +  l^*(B(G, |K|)).$$
  Therefore either:
  $$l^*(B(G, n)) = 1 + l ^ * (B(H, n))$$
  for some maximal subgroup $H$ of $G$, or 
  $$l^*(B(G, n)) = 1 + l ^ * (B(G, m)) +  l ^ * (B(G, r))$$
  where $m + r = n$. By induction, and since $n > 1$, 
  \begin{eqnarray*}
    1 + l ^ * (B(G, m)) +  l ^ * (B(G, r)) & = & n (l(G) + 1) + n - 1 \\
    & > & n\ l(G) + n\\ 
    & = & n(l(H) + 1) + n\\
    & = & 1 + l ^ * (B(H, n)),
  \end{eqnarray*}
  and the result follows.
\end{proof}

In particular, we see that
$$l^*(I_n)=-1+\sum_{i=1}^{n+1}{n\choose i-1}(l(S_{i-1}+2)-1.$$
Some small values of the inverse subsemigroup lengths of the four examples of
inverse semigroups from the previous section can be seen in Table
\ref{fig-inverse-inverse}.

\begin{table}[ht]
  \begin{center}
    \begin{tabular}{|c|c|c|c|c|c|c|c|c|c|}\hline
      $n$   &1&2&3&4&5&6&7&8&9\\\hline
      $l^*(I_n)$  & 1 & 5 & 15 & 39 & 96 & 229 & 533 & 1217 & 2742\\\hline
      $l^*(I_n^*)$&0 & 2 & 11 & 49 & 223 & 1065 & 5337 & 28231 & 158939 \\\hline
      $l^*(POI_n)$& 1 & 4 & 11 & 26 & 57 & 120 & 247 & 502 & 1013\\\hline
      $l^*(POPI_n)$&1 & 6 & 17 & 44 & 97 & 208 & 429 & 884 & 1814 \\\hline
    \end{tabular}
    \caption{The length of the longest chain of non-empty proper inverse
    subsemigroups of some well-known inverse semigroups.}
    \label{fig-inverse-inverse}
  \end{center}
\end{table}

\section{Completely regular semigroups}\label{section-completely-regular}

In this section, we consider a special type of semigroup, which does not have
any leagues in any of its $\J$-classes.  A semigroup is
\emph{completely regular} if every element belongs to a subgroup. 

It follows by the Rees Theorem \cite[Theorems 3.2.3 and 4.1.3]{Howie1995aa} that
the principal factor of a $\J$-class $J$ of a finite completely regular
semigroup $S$ is isomorphic to a Rees 0-matrix semigroup $\mathcal{M}^0[I, G, J;
P]$ where $G$ is a finite group and $P$ is a $|J|\times |I|$ matrix with entries
in $G$. 

\begin{theorem}\label{thm-completely-regular}
  Let $S$ be a completely regular semigroup where the numbers of $\L$- and
  $\R$-classes are $m$ and $n$, and where the $\J$-classes of $S$ are $J_1,
  \ldots, J_r$. If $G_i$ is a maximal subgroup of $S$ contained in $J_i$, then 
  $$l(S)=m+n-r-1+\sum_{i=1}^{r} l(G_i).$$
\end{theorem}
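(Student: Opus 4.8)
The plan is to reduce the computation to that of the principal factors, exactly as was done for inverse semigroups. Since every completely regular semigroup is regular, Lemma~\ref{lemma-regular} gives $l(S)=\big(\sum_{i=1}^{r}l(J_i^*)\big)-1$, so it suffices to compute each $l(J_i^*)$. By the form of the Rees Theorem quoted above, $J_i$ is itself a completely simple semigroup isomorphic to a Rees matrix semigroup $\mathcal{M}[A_i,G_i,B_i;P_i]$ whose sandwich matrix $P_i$ has all its entries in $G_i$ --- it has no zero entries precisely because $J_i$, being a $\J$-class of a completely regular semigroup, is a subsemigroup and hence a \emph{completely simple}, not merely completely $0$-simple, semigroup --- and here $|A_i|$ and $|B_i|$ are the numbers of $\R$- and $\L$-classes of $J_i$, so that $J_i^*\cong\mathcal{M}[A_i,G_i,B_i;P_i]\cup\{0\}$. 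Writing $n=\sum_i|A_i|$ and $m=\sum_i|B_i|$, the theorem follows immediately once we prove that
\[
  l(M)=\alpha+\beta-2+l(G)\qquad\text{and}\qquad l(M\cup\{0\})=\alpha+\beta-1+l(G)
\]
for every finite completely simple $M=\mathcal{M}[A,G,B;P]$, where $\alpha=|A|$ and $\beta=|B|$; summing the second identity over $i$ and subtracting $1$ gives $l(S)=n+m-r-1+\sum_i l(G_i)$.

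The second identity reduces to the first: in $M\cup\{0\}$ every subsemigroup either lies in $M$ or equals $\{0\}\cup C$ for some subsemigroup $C$ of $M$ (possibly $C=\emptyset$), since a product of two elements of $M$ is never $0$; inspecting where a chain first acquires $0$ then shows $l(M\cup\{0\})=l(M)+1$. For the lower bound in the first identity, I would normalise $P$ so that some row and some column consist of identities (in particular $p_{11}=1_G$ for the distinguished indices $1\in A$, $1\in B$), fix a maximal chain of subgroups $\{1_G\}=H_0\subsetneq\cdots\subsetneq H_{l(G)}=G$, and use
\[
  \{1\}\times H_0\times\{1\}\subsetneq\cdots\subsetneq\{1\}\times G\times\{1\}\subsetneq\cdots\subsetneq A\times G\times\{1\}\subsetneq\cdots\subsetneq A\times G\times B=M ,
\]
enlarging the group coordinate one subgroup at a time ($l(G)$ steps), then the set of first coordinates one index at a time ($\alpha-1$ steps), then the set of third coordinates one index at a time ($\beta-1$ steps); each set displayed is readily checked to be a subsemigroup (for the first block this uses $p_{11}=1_G$, the other two blocks being subsemigroups for any $P$). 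This gives $l(M)\ge\alpha+\beta-2+l(G)$.

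For the upper bound I would argue by induction on $(|G|,\alpha+\beta)$ exactly as in the proof of Proposition~\ref{thm-brandt}, using the classification of maximal subsemigroups of Rees matrix semigroups from \cite{Graham1968aa}: a maximal subsemigroup of $M$ is $(A\setminus\{i\})\times G\times B$ for some $i$ (when $\alpha\ge2$), or $A\times G\times(B\setminus\{\mu\})$ for some $\mu$ (when $\beta\ge2$), or isomorphic to a Rees matrix semigroup $\mathcal{M}[A,H,B;\widehat P]$ over a maximal subgroup $H$ of $G$. Since $l(M)=1+\max_N l(N)$ over maximal proper subsemigroups $N$ when $|M|>1$ (the case $|M|=1$ being trivial), the inductive hypothesis gives $1+\big((\alpha-1)+\beta-2+l(G)\big)=\alpha+\beta-2+l(G)$ for the first two types and $1+\big(\alpha+\beta-2+l(H)\big)\le\alpha+\beta-2+l(G)$ for the third, because $l(H)\le l(G)-1$; combined with the lower bound this yields equality. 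The main point to check is that the third, ``subgroup type'', family --- which is genuinely needed, since it is the only family available in the base case $\alpha=\beta=1$, where $M\cong G$ and $l(M)=l(G)=1+\max_H l(H)$ --- never produces a longer chain than the row- and column-removals, which is exactly the inequality $l(H)\le l(G)-1$ for proper subgroups $H$. (Alternatively one can avoid \cite{Graham1968aa} entirely: along a subsemigroup chain $C_0\subsetneq\cdots\subsetneq C_t$ in $M$, the sets of first and third coordinates $\pi_A(C_j)$ and $\pi_B(C_j)$ are non-decreasing and strictly grow at most $\alpha-1$ and $\beta-1$ times in total, while every remaining step strictly enlarges the subgroup $C_j\cap H$ of a fixed maximal subgroup $H$ of $M$ meeting $C_0$ --- here one uses that each $C_j$ contains the idempotent of every one of its $\mathscr{H}$-classes and is determined by $\pi_A(C_j)$, $\pi_B(C_j)$ and $C_j\cap H$ --- so there are at most $l(G)$ such steps and hence $t\le\alpha+\beta-2+l(G)$.)
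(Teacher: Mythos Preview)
Your proof is correct and follows essentially the same route as the paper: reduce via Lemma~\ref{lemma-regular} to the principal factors, then compute $l(\mathcal{M}[A,G,B;P])$ by induction using the Graham--Graham--Rhodes classification of maximal subsemigroups (your types (i)--(iii) match the paper's exactly). The only cosmetic difference is that you split the computation into an explicit lower-bound chain plus an upper bound from the induction, whereas the paper observes directly that all three types of maximal subsemigroup $T$ satisfy $l(T)=|I|+|J|+l(G)-3$ by induction (using $l(H)=l(G)-1$ for a suitably chosen maximal $H$ in type~(iii)), so the induction alone yields the exact value without a separate lower-bound construction; your parenthetical alternative avoiding \cite{Graham1968aa} does not appear in the paper.
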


\begin{proof}
  By Lemma~\ref{lemma-regular}, it suffices to show that 
  \begin{equation*}
    l(\mathcal{M}^0[I, G, J; P])=|I|+|J|+l(G)-1
  \end{equation*}
  where $\mathcal{M}^0[I, G, J; P]$ is a Rees 0-matrix semigroup over the group
  $G$ and $P$ is a $|J|\times |I|$ matrix with entries in $G$ (i.e.\ there are no
  entries equal $0$). Furthermore, since the length of a semigroup $S$ with zero
  adjoined is $1$ more than the length of $S$, it suffices to show that 
  \begin{equation*}
    l(\mathcal{M}[I, G, J; P])=|I|+|J|+l(G)-2.
  \end{equation*}
   where $R=\mathcal{M}[I, G, J; P]$ is a Rees matrix semigroup without zero.

  We proceed by induction on $|R|=|I|\times |G|\times |J|$. If
  $|I|=|J|=|G|=1$, then $|R|=1$ and so $l(R)=0$ and 
  $|I|+|J|+l(G)-2=1+1+0-2=0$. 

  As in the proof of Proposition~\ref{thm-brandt}, the length of $R$ is the
  length of one of its maximal subsemigroups plus $1$. Remark 1 of
  \cite{Graham1968aa} implies that a maximal subsemigroup of $R$ is isomorphic
  to one of:
  \begin{enumerate}[(i)]
    \item $I\setminus\{i\}\times G\times J$ for some $i\in I$;
    \item $I\times G\times J\setminus \{j\}$ for some $j\in J$;
    \item $\mathcal{M}[I, H, J; Q]$ where $H$ is a maximal subgroup of $G$ and
      $Q$ is a $|J|\times |I|$ matrix with entries in $H$. 
  \end{enumerate}
  Thus every maximal subsemigroup $T$ of $R$ is isomorphic to a completely regular
  Rees matrix semigroup. In any case, by induction, $l(T)=|I|+|J|+l(G)-3$, the
  result follows. 
\end{proof}

A semigroup $S$ is a \emph{band} if every element is an idempotent, i.e. $x^2=x$
for all $x\in S$. Every band is a completely regular semigroup where the maximal
subgroups are trivial, and so Theorem~\ref{thm-completely-regular} tells us that 
$$l(S)=m+n-r-1$$
where $m$, $n$, and $r$ are the numbers of $\L$-, $\R$-, and $\J$-classes of
$S$, respectively.

The $n$-generated \emph{free band} $B_n$ is the free object in the category of
bands, and, as it turns out, it is finite; see \cite[Section 4.5]{Howie1995aa}
for more details.  The $\J$-classes in $B_n$ are in 1-1 correspondence with the
non-empty subsets of $\{1,2,\ldots, n\}$, and the number of $\L$- and
$\R$-classes in any $\J$-class corresponding to a subset of size $k$ is:
$$k\prod_{i=1}^{k-2}(k-i)^{2^i}.$$
The following is an immediate corollary of these observations and
Theorem~\ref{thm-completely-regular}.

\begin{cor}
  The length of the free band $B_n$ with $n$ generators is:
  $$2\sum_{k=1}^{n}\left[{n\choose k}k\prod_{i=1}^{k-2}(k-i)^{2^i}\right]-2^n.$$
\end{cor}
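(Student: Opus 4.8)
The plan is to combine Theorem~\ref{thm-completely-regular} with the structural data about the free band $B_n$ that immediately precedes the corollary. Since $B_n$ is a band, all its maximal subgroups are trivial, so $l(G_i)=0$ for every $\J$-class $J_i$, and Theorem~\ref{thm-completely-regular} reduces to $l(B_n)=m+n'-r-1$, where (to avoid the clash with the generator count $n$) I write $m$ for the total number of $\L$-classes, $n'$ for the total number of $\R$-classes, and $r$ for the number of $\J$-classes of $B_n$.

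Next I would compute each of these three quantities. The $\J$-classes of $B_n$ are in bijection with the non-empty subsets of $\{1,\ldots,n\}$, so $r=2^n-1$. For a fixed $\J$-class corresponding to a subset of size $k$, the stated count $k\prod_{i=1}^{k-2}(k-i)^{2^i}$ is simultaneously the number of $\L$-classes and the number of $\R$-classes in that $\J$-class (the free band has the symmetry interchanging the two one-sided Green's relations). Summing over all subsets, grouped by size, gives
$$m=n'=\sum_{k=1}^{n}\binom{n}{k}\,k\prod_{i=1}^{k-2}(k-i)^{2^i}.$$
Substituting into $l(B_n)=m+n'-r-1$ yields
$$l(B_n)=2\sum_{k=1}^{n}\binom{n}{k}\,k\prod_{i=1}^{k-2}(k-i)^{2^i}-(2^n-1)-1
=2\sum_{k=1}^{n}\left[\binom{n}{k}k\prod_{i=1}^{k-2}(k-i)^{2^i}\right]-2^n,$$
which is exactly the claimed formula.

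There is essentially no obstacle here: the only things to be careful about are (a) confirming that the quoted formula $k\prod_{i=1}^{k-2}(k-i)^{2^i}$ really counts the $\L$-classes (equivalently $\R$-classes) within a size-$k$ $\J$-class, which is the content of the cited discussion of $B_n$ in \cite[Section 4.5]{Howie1995aa}, and (b) keeping the two meanings of ``$n$'' straight — the generator count in $B_n$ versus the $\R$-class count in Theorem~\ref{thm-completely-regular}. Once the substitution is made, arithmetic does the rest, so the proof is a one-line application of the preceding theorem together with the known combinatorial description of the free band.
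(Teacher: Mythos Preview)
Your proof is correct and follows exactly the route the paper intends: the corollary is stated there as an immediate consequence of Theorem~\ref{thm-completely-regular} together with the quoted structural facts about $B_n$, and you have simply written out that substitution explicitly. The only addition you supply beyond the paper is the careful bookkeeping (distinguishing the two uses of $n$ and checking $r=2^n-1$), which is appropriate.
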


Since every band with $n$ generators is a homomorphic image of the free band
$B_n$, it follows that $l(B_n)$ is an upper bound for $l(S)$ for every $n$
generated band $S$. 

\begin{table}[ht]\label{fig-free-band}
  \begin{center}
    \begin{tabular}{|c|c|c|c|c|c|c|}\hline
      $n$   &1&2&3&4&5&6\\\hline
      $l(B_n)$&0& 4& 34& 1264&
      3323778&33022614177128\\\hline
    \end{tabular}
    \caption{The length of the longest chain of non-empty proper subsemigroups
    in the free band $B_n$.}
  \end{center}
\end{table}

\section{Numbers of subsemigroups}\label{section-numbers}

Our technique for producing long chains also gives lower bounds for the number
of subsemigroups of certain semigroups.

We note that some results are known for groups. The number of subgroups of
$S_n$ is bounded below by roughly $2^{n^2/16}$. For this group contains an
elementary abelian subgroup of order $2^{\lfloor n/2\rfloor}$ generated by
$\lfloor n/2\rfloor$ disjoint transpositions; and an elementary abelian
group of order $2^m$ has 
$$\gauss{m}{k}{2}$$ 
subgroups of order $2^k$, this number being greater than $2^{k(m-k)}$, and so at
least $2^{\lfloor m^2/4\rfloor}$ when $k=\lfloor m/2\rfloor$. Remarkably,
Pyber~\cite{pyber} found an upper bound for the number of subgroups, also of the
form $2^{cn^2}$ for constant $c$.

If a null semigroup has $n$ non-zero elements, then it has $2^n$ subsemigroups,
since the zero together with any set of non-zero elements forms a subsemigroup.
So the existence of large null semigroups in principal factors of $T_n$, for
example, gives lower bounds for the number of subsemigroups, and on the number
of generators required.

\begin{theorem}
  Let 
  $$c = \frac{ \ee^{-2}}{3\sqrt{\ee^{-1} - 2 \ee^{-2}} \sqrt{3}}.$$
  Then
  \begin{enumerate}
    \item the number of subsemigroups of $T_n$ is at least $2^{(c - o(1))
      n^{n-1/2}}$;

    \item the smallest number $d(n)$ for which any subsemigroup of $T_n$ can be
      generated by $d(n)$ elements is at least $(c - o(1)) n^{n-1/2}$.
  \end{enumerate}
\end{theorem}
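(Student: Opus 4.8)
The plan is to exhibit, inside $T_n$, a single league of a well-chosen rank $k$ so large that the associated subsemigroup already contains about $\ee^{-2}n^{n-1/2}/(3\sqrt3\,\tau)$ elements of maximal rank, where $\tau=\sqrt{\ee^{-1}-2\ee^{-2}}$; both assertions then follow by counting subsets of, and generating sets for, this subsemigroup.

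First I would recall the league used in the proof of Theorem~\ref{thm-full-transformation-monoid}: for a rank $k$, let $P$ be the set of all $k$-partitions of $\{1,\ldots,n\}$ having $\{n\}$ as a part and $S$ the set of all $k$-subsets not containing $n$; then $(P,S)$ is a league of content $\binom{n-1}{k}S(n-1,k-1)$. Let $X$ be the set of all $f\in T_n$ whose kernel lies in $P$ and whose image lies in $S$. Since each $\mathscr{H}$-class with prescribed kernel and image has size $k!$,
$$|X|=\binom{n-1}{k}S(n-1,k-1)\,k!=(n-k)\,N(n-1,k-1),$$
the last equality being the identity already used in the proof of Theorem~\ref{thm-full-transformation-monoid}. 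Let $I$ be the ideal of all maps of rank strictly less than $k$. By the defining property of a league, the product of any two elements of $X$ has rank $<k$ and hence lies in $I$; therefore $I\cup Y$ is a subsemigroup of $T_n$ for every $Y\subseteq X$, and $X$ is precisely the set of rank-$k$ elements of $I\cup X$. This already yields $2^{|X|}$ pairwise distinct subsemigroups $I\cup Y$ of $T_n$. Moreover, no element of $X$ is a nontrivial product of elements of $I\cup X$ (such a product would have rank $<k$), so every generating set of $I\cup X$ contains $X$; hence $d(n)\ge d(I\cup X)\ge|X|$. Everything therefore reduces to choosing $k$ with $|X|=(n-k)N(n-1,k-1)\ge(c-o(1))n^{n-1/2}$.

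To do this I would apply Chebyshev's inequality to the rank $r$ of a uniformly random element of $T_{n-1}$ with $\lambda=\sqrt3$ standard deviations: writing $\mu=E(n-1)$, the probability that $|r-\mu|<\sqrt3\,\sigma(n-1)$ is at least $1-\tfrac13=\tfrac23$, so the integer interval $K=\set{j}{|j-\mu|<\sqrt3\,\sigma(n-1)}$ satisfies $\sum_{j\in K}N(n-1,j)\ge\tfrac23(n-1)^{n-1}$. By the bound $\sigma(n-1)\le\tau\sqrt n$ from the lemma on the standard deviation of the rank, $|K|\le 2\sqrt3\,\tau\sqrt n\,(1+o(1))$, so by the pigeonhole principle there is $j^{*}\in K$ with
$$N(n-1,j^{*})\ \ge\ \frac{\tfrac23\,(n-1)^{n-1}}{2\sqrt3\,\tau\sqrt n}\,(1-o(1))\ =\ \frac{(n-1)^{n-1}}{3\sqrt3\,\tau\sqrt n}\,(1-o(1)).$$
Set $k=j^{*}+1$. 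Since $j^{*}<\mu+\sqrt3\,\sigma(n-1)=(1-\ee^{-1})n+o(n)$ we get $n-k\ge\ee^{-1}n\,(1-o(1))$, and $(n-1)^{n-1}=(\ee^{-1}+o(1))\,n^{n-1}$; multiplying,
$$|X|=(n-k)\,N(n-1,k-1)\ \ge\ \ee^{-1}n\cdot\frac{(\ee^{-1}+o(1))\,n^{n-1}}{3\sqrt3\,\tau\sqrt n}\,(1-o(1))\ =\ \frac{\ee^{-2}}{3\sqrt3\,\tau}\,n^{n-1/2}\,(1-o(1)),$$
which equals $(c-o(1))\,n^{n-1/2}$. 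Feeding this into the previous paragraph gives at least $2^{|X|}\ge 2^{(c-o(1))n^{n-1/2}}$ subsemigroups of $T_n$, proving (a), and $d(n)\ge|X|\ge(c-o(1))n^{n-1/2}$, proving (b).

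The only genuinely delicate point is the trade-off in the width of the window $K$: a wider window lets Chebyshev capture more of the mass but dilutes the pigeonhole bound, while a narrower one discards a constant fraction of the transformations. Optimising the resulting factor $(1-\lambda^{-2})/(2\lambda)$ over $\lambda>1$ yields $\lambda=\sqrt3$ with value $1/(3\sqrt3)$, which is exactly the numerical constant in $c$; in particular no local-limit refinement of the distribution of the rank is required, and the rest is the bookkeeping already carried out in Section~\ref{section-full-trans}.
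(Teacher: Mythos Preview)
Your proof is correct and follows essentially the same approach as the paper: Chebyshev's inequality on a window of width $\lambda\sigma(n-1)$ around the mean rank, pigeonhole over the window to locate a single rank $k$ with a large null piece of size $(n-k)N(n-1,k-1)$, and the optimisation $\lambda=\sqrt3$. Your write-up is in fact slightly more explicit than the paper's in two respects: you work directly with the subsemigroups $I\cup Y$ of $T_n$ (rather than with null subsemigroups of the principal factor), and you spell out why every generating set of $I\cup X$ must contain $X$, which is exactly what is needed for part~(b).
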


\begin{proof}
  The reader is reminded of the notation used in the proof of
  Theorem~\ref{thm-full-transformation-monoid}. We have exhibited then a null
  subsemigroup of $T_n$ of order $(n-k) N(n-1,k-1)$ for all $k \in \{1,\ldots,
  n\}$. We shall give a lower bound on the the order of the largest of those
  semigroups. In particular, we restrict ourselves to the set $J = \{k : |k -
  E(n-1)| < d \tau n^{1/2}\}$ where $E(n-1)$ is the expected rank of a
  transformation in $T_{n-1}$, $\tau = \sqrt{\ee^{-1} - 2 \ee^{-2}}$ and $d$ is a
  constant which we will specify later.  Using similar arguments as before, we
  can then prove that for all $k \in J$,
  \begin{eqnarray*}
    n - k &\ge& \ee^{-1} n - o(n)\\
    \sum_{k \in J} N(n-1,k-1) &\ge& \ee^{-1} \frac{d^2 - 1}{d^2} n^{n-1}\\
    \sum_{k \in J} (n-k) N(n-1,k-1) &\ge& \ee^{-1} \frac{d^2 - 1}{d^2} n^n -
    o(n^n),
  \end{eqnarray*}
  and hence the largest semigroup for $k \in J$  has order at least
  $$
  \frac{\ee^{-2}}{2\tau} \frac{d^2 - 1}{d^3} n^{n - 1/2} - o(n^{n - 1/2}).
  $$
  The fraction is maximised for $d = \sqrt{3}$. 
\end{proof}

\paragraph{Remark} Part (b) answers a question of Brendan McKay to the first
author a few years ago and gives a partial answer to Open Problem 1 in
\cite{Gray14}. The analogous number for $S_n$ (the smallest $d$ such that any
subgroup can be generated by at most $d$ elements) is only $\lfloor n/2\rfloor$
for $n>3$, as shown by McIver and Neumann~\cite{mn}.  Jerrum~\cite{jerrum} gave
a weaker bound $n-1$, but with a constructive (and computationally efficient)
proof.

\section{Open problems}

\paragraph{Problem 1} Does the ratio $l(T_n)/|T_n|$ tend to a limit as
$n\to\infty$? If so, what is this limit? Is it possible to improve on the
constant $\ee^{-2}$ by either more careful analysis, or counting the extra
steps available in a principal factor?

\paragraph{Problem 2} Evaluate the function $F(k,n)$ giving the largest
content of a league of rank $k$ on $\{1,\ldots,n\}$, and the function
$F^*(n,k)$ giving the largest content involving partitions into intervals.

\paragraph{Problem 3} In most cases, our results are not strong enough to
show that the number of subsemigroups of a semigroup $S$ is at least
$c^{|S|}$ for some $c>1$. Does such a result hold in the case $S=T_n$, for
example?

\paragraph{Problem 4} What can be said about the number of inverse
subsemigroups of an inverse semigroup, for example the symmetric inverse
semigroup $I_n$?

\end{document}